\newcommand{\Mod}[1]{\ (\textup{mod}\ #1)}
\theoremstyle{plain} 
\newtheorem{theorem}{\indent\sc Theorem}[section]
\newtheorem{lemma}[theorem]{\indent\sc Lemma}
\newtheorem{corollary}[theorem]{\indent\sc Corollary}
\newtheorem{proposition}[theorem]{\indent\sc Proposition}
\theoremstyle{definition} 
\newtheorem{remark}[theorem]{\indent\sc Remark}
\newtheorem{example}[theorem]{\indent\sc Example}
\def\address#1#2{\begingroup
\noindent\parbox[t]{7.8cm}{%
\small{\scshape\ignorespaces#1}\par\vskip1ex
\noindent\small{\itshape E-mail address}%
\/: #2\par\vskip4ex}\hfill%
\endgroup}%
\title{On some Fricke families and application to the Lang-Schertz conjecture} 
\author{
\textsc{Ho Yun Jung, Ja Kyung Koo and Dong Hwa Shin$^*$} 
}
\date{} 
\begin{document}

\allowdisplaybreaks

\maketitle

\footnote{ 
2010 \textit{Mathematics Subject Classification}. Primary 11G15, Secondary 11F03.}
\footnote{ 
\textit{Key words and phrases}. Complex multiplication, Fricke families, modular functions.}
\footnote{
\thanks{
$^*$The corresponding author was supported by Hankuk University of Foreign Studies Research Fund of 2014.} }

\begin{abstract}
We first investigate two kinds of Fricke families consisting of
Fricke functions and Siegel functions, respectively. And, in terms
of their special values we generate ray class fields of imaginary
quadratic fields, which is related to the Lang-Schertz conjecture.
\end{abstract}

\section {Introduction}

For a positive integer $N$, let $\mathcal{F}_N$ be the field of all meromorphic modular functions
of level $N$ whose Fourier coefficients lie in the $N$-th cyclotomic field
$\mathbb{Q}(\zeta_N)$ with $\zeta_N=e^{2\pi i/N}$.
Then it is well-known that $\mathcal{F}_1$ is generated over $\mathbb{Q}$ by the elliptic modular function
\begin{equation*}
j(\tau)=1/q+744+196884q+21493760q^2
+\cdots\quad(\tau\in\mathbb{H},~\textrm{the complex upper half plane}),
\end{equation*}
where $q=e^{2\pi i\tau}$.
Furthermore, $\mathcal{F}_N$ is a Galois extension of $\mathcal{F}_1$ whose Galois group is isomorphic to $\mathrm{GL}_2(\mathbb{Z}/N\mathbb{Z})/\{\pm I_2\}$
(\cite[$\S$6.1--6.2]{Shimura}).
\par
For $N\geq2$ we let
\begin{equation*}
  \mathcal{V}_N=\{\mathbf{v}\in\mathbb{Q}^2~|~
  \textrm{$\mathbf{v}$ has primitive denominator $N$}\}
\end{equation*}
, that is, $\mathbf{v}\in\mathbb{Q}^2$ belongs to $\mathcal{V}_N$ if
and only if $N$ is the smallest positive integer satisfying
$N\mathbf{v} \in\mathbb{Z}^2$. We call a family
$\{h_\mathbf{v}(\tau)\}_{\mathbf{v}\in\mathcal{V}_N}$ of functions
in $\mathcal{F}_N$
 a \textit{Fricke family of level $N$} if
\begin{itemize}
\item[(F1)] $h_\mathbf{v}(\tau)$ is weakly holomorphic (namely, $h_\mathbf{v}(\tau)$ is
holomorphic on $\mathbb{H}$),
\item[(F2)] $h_\mathbf{v}(\tau)$ depends only on $\pm\mathbf{v}\Mod{\mathbb{Z}^2}$,
\item[(F3)] $h_\mathbf{v}(\tau)^\gamma=h_{{^t}\gamma\mathbf{v}}(\tau)$
for all $\gamma\in\mathrm{GL}_2(\mathbb{Z}/N\mathbb{Z})/\{\pm I_2\}\simeq
\mathrm{Gal}(\mathcal{F}_N/\mathcal{F}_1)$, where ${^t}\gamma$ indicates the transpose of $\gamma$.
\end{itemize}
In this paper, we shall deal with two kinds of Fricke families
$\{f_\mathbf{v}(\tau)\}_{\mathbf{v}\in\mathcal{V}_N}$ and
$\{g_\mathbf{v}(\tau)^{12N}\}_{\mathbf{v}\in\mathcal{V}_N}$; one consisting of Fricke functions and
the other consisting of $12N$-th powers of Siegel functions ($\S$\ref{sectwo}).
\par
Let $K$ be an imaginary quadratic field of discriminant $d_K$ other than $\mathbb{Q}(\sqrt{-1})$ and $\mathbb{Q}(\sqrt{-3})$, and $\mathfrak{n}$ be a
proper nontrivial ideal of the ring of integers $\mathcal{O}_K$ of $K$.
Furthermore, let $N$ ($\geq2$) be the smallest positive integer in $\mathfrak{n}$ and
$C$ be a ray class in the ray class group $\mathrm{Cl}(\mathfrak{n})$ of $K$ modulo $\mathfrak{n}$.  For a Fricke family $\{h_\mathbf{v}(\tau)\}_{\mathbf{v}\in\mathcal{V}_N}$ of level $N$, we shall define in
$\S$\ref{invariantdef}
the \textit{Fricke invariant} $h_\mathfrak{n}(C)$ which depends only on $\mathfrak{n}$ and $C$.
Then the first main theorem of this paper asserts that $f_\mathfrak{n}(C)$ generates
the ray class field $K_\mathfrak{n}$ of $K$ modulo $\mathfrak{n}$
over the Hilbert class field $H_K$ of $K$
(Theorem \ref{Frickesingular}).
\par
On the other hand, Lang (\cite[p.292]{Lang}) and Schertz (\cite{Schertz}) conjectured that
$g_\mathfrak{n}^{12N}(C)$, which is called the \textit{Siegel-Ramachandra invariant}
modulo $\mathfrak{n}$ at $C$, generates $K_\mathfrak{n}$ over $H_K$ (even, over $K$). Recently, Cho (\cite{Cho}) gave a conditional proof by adopting Schertz's idea and using the second Kronecker limit formula as follows:
Let $\mathfrak{n}=\prod_{k=1}^r\mathfrak{p}_k^{e_k}$ be the prime ideal factorization of $\mathfrak{n}$. If the exponent of the quotient group
$(\mathcal{O}_K/\mathfrak{p}_k^{e_k})^\times/\{\alpha+\mathfrak{p}_k^{e_k}~|~
\alpha\in\mathcal{O}_K^\times\}$ is greater than $2$ for every $k=1,\ldots,r$, then $g_\mathfrak{n}^{12N}(C)$ generates $K_\mathfrak{n}$ over $K$.
\par
As the second main theorem we shall present a new conditional proof of the Lang-Schertz conjecture
(Theorem \ref{Siegelinvariant} and Corollary \ref{maincor}). We shall further show that
the $6N$-th root, which will give a relatively small power, of a certain quotient of Siegel-Ramachandra invariants
generates $K_\mathfrak{n}$ over $H_K$ when $d_K\equiv N\equiv0\Mod{4}$,
$|d_K|\geq4N^{4/3}$ and $\mathfrak{n}=N\mathcal{O}_K$ (Theorem \ref{smallinvariant}).
To this end, we shall find
some relations between Fricke and Siegel functions (Lemma \ref{fg}), and
 make use of an explicit version of Shimura's reciprocity law due to Stevenhagen (Proposition  \ref{Shimura}).
And, we note that these invariants have minimal polynomials with
(relatively) small coefficients (Example \ref{lastexample}).

\section {Fricke families}\label{sectwo}

Let $\Lambda$ be a lattice in $\mathbb{C}$. The \textit{Weierstrass $\wp$-function} relative to $\Lambda$ is defined by
\begin{equation*}
\wp(z;\Lambda)=1/z^2+\sum_{\omega\in\Lambda\setminus\{0\}}
(1/(z-\omega)^2-1/\omega^2)\quad(z\in\mathbb{C}).
\end{equation*}
Then it is a meromorphic function on $z$ and is periodic with
respect to $\Lambda$.

\begin{lemma}\label{pval}
If $z_1,z_2\in\mathbb{C}\setminus\Lambda$, then
$\wp(z_1;\Lambda)=\wp(z_2;\Lambda)$ if and only if $z_1\equiv\pm z_2\Mod{\Lambda}$.
\end{lemma}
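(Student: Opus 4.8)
The plan is to reduce everything to the standard structure theory of elliptic functions, whose relevant inputs are that $\wp(z;\Lambda)$ is even in $z$, that $\wp'(z;\Lambda)$ is odd, and that a nonconstant elliptic function has as many zeros as poles, counted with multiplicity, in a fundamental period parallelogram. I shall suppress $\Lambda$ and write $\wp(z)$; note that the hypothesis $z_1,z_2\notin\Lambda$ guarantees that $\wp(z_1)$ and $\wp(z_2)$ are finite. The forward implication is immediate: if $z_1\equiv z_2\Mod{\Lambda}$ then $\wp(z_1)=\wp(z_2)$ by periodicity, while if $z_1\equiv-z_2\Mod{\Lambda}$ then $\wp(z_1)=\wp(-z_2)=\wp(z_2)$ since $\wp$ is even.

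For the converse I would set $c=\wp(z_1)$ and study the elliptic function $F(z)=\wp(z)-c$. Since $\wp$ has a single pole per period parallelogram, namely a double pole at the origin, $F$ is of order $2$ and hence has exactly two zeros there, counted with multiplicity. Both $z_1$ and $-z_1$ are zeros of $F$, the latter by evenness. If $2z_1\notin\Lambda$, then $z_1$ and $-z_1$ are distinct modulo $\Lambda$ and furnish two simple zeros that exhaust those of $F$; as $z_2$ is also a zero of $F$, it must then be congruent to $z_1$ or to $-z_1$, giving $z_1\equiv\pm z_2\Mod{\Lambda}$.

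The case $2z_1\in\Lambda$ is where I expect the only real care is needed. Here $-z_1\equiv z_1\Mod{\Lambda}$, so the two guaranteed zeros $z_1,-z_1$ have collapsed, and I must verify that $z_1$ is genuinely a double zero in order to account for the full order of $F$. This follows by combining $\wp'(-z_1)=\wp'(z_1)$ (periodicity) with $\wp'(-z_1)=-\wp'(z_1)$ (oddness of $\wp'$) to force $\wp'(z_1)=0$; hence $z_1$ is a zero of $F$ of multiplicity at least $2$, so exactly $2$, and it is the only zero modulo $\Lambda$. Since $z_2$ is a zero of $F$, we obtain $z_2\equiv z_1\equiv-z_1\Mod{\Lambda}$, again yielding $z_1\equiv\pm z_2\Mod{\Lambda}$ and completing the argument.
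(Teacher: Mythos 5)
Your proof is correct and complete, including the necessary care at the $2$-torsion case $2z_1\in\Lambda$, where you rightly verify via $\wp'(z_1)=-\wp'(-z_1)=-\wp'(z_1)$ that $z_1$ is a double zero of $\wp(z)-c$. The paper itself gives no argument but defers to Silverman, and the order-counting proof you present is exactly the standard one found there, so your approach coincides with the paper's intended proof.
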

\begin{proof}
See \cite[Chaper IV, $\S$3]{Silverman}.
\end{proof}

Let $N$ ($\geq2$) be an integer and $\mathbf{v}=\left[\begin{matrix}v_1\\v_2\end{matrix}\right]\in\mathcal{V}_N$. We define
\begin{equation}\label{pv}
\wp_{\mathbf{v}}(\tau)=
\wp(v_1\tau+v_2;[\tau,1])
\quad(\tau\in\mathbb{H}),
\end{equation}
which is a weakly holomorphic modular form of level $N$ and weight $2$
(\cite[Chapter 6]{Lang}). We further define auxiliary functions $g_2(\tau)$, $g_3(\tau)$
and $\Delta(\tau)$ on $\mathbb{H}$ by
\begin{eqnarray*}
g_2(\tau)=60\sum_{\omega\in[\tau,1]\setminus\{0\}}1/\omega^4,\quad
g_3(\tau)=140\sum_{\omega\in[\tau,1]\setminus\{0\}}1/\omega^6\quad\textrm{and}\quad
\Delta(\tau)=g_2(\tau)^3-27g_3(\tau)^2,
\end{eqnarray*}
which are holomorphic modular forms of level $1$ and weight $4$, $6$ and $12$, respectively
(\cite[Chapter 3, $\S$2]{Lang}). Now, we define the \textit{Fricke function}
(or, the \textit{first Weber function}) $f_{\mathbf{v}}(\tau)$  by
\begin{equation}\label{Fricke}
f_{\mathbf{v}}(\tau)=-2^73^5(g_2(\tau)g_3(\tau)/\Delta(\tau))
\wp_{\mathbf{v}}(\tau)\quad(\tau\in\mathbb{H}).
\end{equation}

\begin{proposition}\label{Frickefamily}
The family
$\{f_\mathbf{v}(\tau)\}_{\mathbf{v}\in\mathcal{V}_N}$
is a Fricke family of level $N$.
\end{proposition}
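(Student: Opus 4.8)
The plan is to check the three axioms (F1)--(F3) together with the (implicitly required) membership $f_{\mathbf{v}}\in\mathcal{F}_N$, reading everything off the definition \eqref{Fricke}. The starting observation is that $f_{\mathbf{v}}$ is an expression of weight $4+6-12+2=0$, assembled from the level-$1$ forms $g_2,g_3,\Delta$ and the level-$N$ form $\wp_{\mathbf{v}}$; hence it is the correct type of object to be a weight-$0$ modular function of level $N$. I will treat the two analytic axioms first and then the arithmetic transformation behaviour, which carries the real content.

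Axiom (F1) is immediate. The forms $g_2,g_3$ are holomorphic on $\mathbb{H}$ and $\Delta(\tau)\neq0$ for every $\tau\in\mathbb{H}$, so the scalar factor $-2^73^5 g_2(\tau)g_3(\tau)/\Delta(\tau)$ is holomorphic on $\mathbb{H}$. Since $N\geq2$ forces $\mathbf{v}\notin\mathbb{Z}^2$, and since $\tau\in\mathbb{H}$ makes $\{\tau,1\}$ linearly independent over $\mathbb{R}$, the point $v_1\tau+v_2$ never lies in the lattice $[\tau,1]$; thus $\wp_{\mathbf{v}}$ has no pole on $\mathbb{H}$ and $f_{\mathbf{v}}$ is holomorphic there. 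For (F2), the scalar factor does not involve $\mathbf{v}$, so it suffices to note that if $\mathbf{v}'\equiv\pm\mathbf{v}\Mod{\mathbb{Z}^2}$ then $v_1'\tau+v_2'=\pm(v_1\tau+v_2)+\omega$ with $\omega\in[\tau,1]$, whence $\wp_{\mathbf{v}'}(\tau)=\wp_{\mathbf{v}}(\tau)$ by the periodicity and evenness of $\wp$ (Lemma \ref{pval}).

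The core step is the transformation law under $\mathrm{SL}_2(\mathbb{Z})$. Writing $\alpha=\left[\begin{smallmatrix}a&b\\c&d\end{smallmatrix}\right]\in\mathrm{SL}_2(\mathbb{Z})$, I would use $[\alpha\tau,1]=(c\tau+d)^{-1}[\tau,1]$ and the identity $v_1(\alpha\tau)+v_2=(c\tau+d)^{-1}\big((av_1+cv_2)\tau+(bv_1+dv_2)\big)$ together with the homogeneity $\wp(\lambda z;\lambda\Lambda)=\lambda^{-2}\wp(z;\Lambda)$ to obtain $\wp_{\mathbf{v}}(\alpha\tau)=(c\tau+d)^2\wp_{{}^t\alpha\mathbf{v}}(\tau)$; combined with the weights of $g_2,g_3,\Delta$ the factors $(c\tau+d)^{\pm2}$ cancel and give $f_{\mathbf{v}}(\alpha\tau)=f_{{}^t\alpha\mathbf{v}}(\tau)$. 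Specializing to $\alpha\in\Gamma(N)$ yields ${}^t\alpha\mathbf{v}\equiv\mathbf{v}\Mod{\mathbb{Z}^2}$, so by (F2) $f_{\mathbf{v}}(\alpha\tau)=f_{\mathbf{v}}(\tau)$; thus $f_{\mathbf{v}}$ is $\Gamma(N)$-invariant of weight $0$, and since $1/\Delta$ contributes only a pole at the cusps it is meromorphic there. Inspecting the standard $q$-expansion of $\wp_{\mathbf{v}}$ shows its coefficients lie in $\mathbb{Q}(\zeta_N)$, and hence $f_{\mathbf{v}}\in\mathcal{F}_N$. Under Shimura's identification $\mathrm{Gal}(\mathcal{F}_N/\mathcal{F}_1)\simeq\mathrm{GL}_2(\mathbb{Z}/N\mathbb{Z})/\{\pm I_2\}$, a class $\gamma\in\mathrm{SL}_2(\mathbb{Z}/N\mathbb{Z})$ acts by $h\mapsto h\circ\widetilde\gamma$ for a lift $\widetilde\gamma\in\mathrm{SL}_2(\mathbb{Z})$, so the displayed law reads $f_{\mathbf{v}}^{\gamma}=f_{{}^t\gamma\mathbf{v}}$ and establishes (F3) for the $\mathrm{SL}_2$-part.

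It remains to treat a set of coset representatives for $\mathrm{SL}_2(\mathbb{Z}/N\mathbb{Z})$ in $\mathrm{GL}_2(\mathbb{Z}/N\mathbb{Z})$, for which I would take the diagonal matrices $\beta_d=\left[\begin{smallmatrix}1&0\\0&d\end{smallmatrix}\right]$ with $\gcd(d,N)=1$; here the hard part lies. Such $\beta_d$ acts (again by Shimura) on the $q$-expansion by the automorphism $\sigma_d:\zeta_N\mapsto\zeta_N^d$ of the coefficient field, fixing the powers of $q^{1/N}$. Writing $v_2=b/N$, the relevant roots of unity in the expansion of $\wp_{\mathbf{v}}$ enter only through $e^{2\pi iv_2}=\zeta_N^{b}$ and its powers, while the $q^{1/N}$-exponents are governed by $v_1$; applying $\sigma_d$ therefore replaces $e^{2\pi iv_2}$ by $e^{2\pi i dv_2}$ and leaves the $v_1$-data untouched, which is precisely the expansion of $f_{\beta_d\mathbf{v}}=f_{{}^t\beta_d\mathbf{v}}$ (note ${}^t\beta_d=\beta_d$). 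This gives (F3) for the $\beta_d$. Finally, writing an arbitrary $\gamma=\alpha\beta_d$ with $\alpha\in\mathrm{SL}_2$ and $d=\det\gamma$, and using that $\gamma\mapsto(\mathbf{v}\mapsto{}^t\gamma\mathbf{v})$ is compatible with the right Galois action (since ${}^t(\alpha\beta_d)={}^t\beta_d\,{}^t\alpha$), the two special cases combine to yield (F3) in general. I expect the only genuine obstacle to be this last, determinant part: it requires the explicit $q$-expansion of $\wp_{\mathbf{v}}$ and a careful match between Shimura's Galois description and the transpose action on $\mathbf{v}$, with attention to the conventions for $\pm\mathbf{v}$ and for composition versus inverse in the $\mathrm{SL}_2$-action.
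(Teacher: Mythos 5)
Your proposal is correct and takes essentially the same route as the paper, whose entire proof of Proposition \ref{Frickefamily} is the citation to Lang, \emph{Elliptic Functions}, Chapter 6, \S 2--3: the standard argument given there is exactly your verification, namely the $\mathrm{SL}_2(\mathbb{Z})$-transformation $\wp_{\mathbf{v}}(\alpha\tau)=(c\tau+d)^2\wp_{{}^t\alpha\mathbf{v}}(\tau)$ obtained from homogeneity of $\wp$ and $[\alpha\tau,1]=(c\tau+d)^{-1}[\tau,1]$, combined with the action of $\left[\begin{smallmatrix}1&0\\0&d\end{smallmatrix}\right]$ on $q$-expansions through $\zeta_N\mapsto\zeta_N^d$ and the decomposition $\gamma=\alpha\beta_d$. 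You also handle correctly the one genuinely delicate point, the compatibility of the right Galois action with the transpose via ${}^t(\alpha\beta_d)={}^t\beta_d\,{}^t\alpha$, so there is no gap.
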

\begin{proof}
See \cite[Chapter 6, $\S$2--3]{Lang}.
\end{proof}

The \textit{Weierstrass $\sigma$-function} relative to $\Lambda$ is defined by
\begin{equation*}
\sigma(z;\Lambda)=z\prod_{\omega\in \Lambda\setminus\{0\}}(1-z/\omega)
e^{z/\omega+(1/2)(z/\omega)^2} \quad(z\in\mathbb{C}).
\end{equation*}
Taking logarithmic derivative we come up with the \textit{Weierstrass
$\zeta$-function} as
\begin{equation*}
\zeta(z;\Lambda)=\sigma'(z;\Lambda)/\sigma(z;\Lambda)
=1/z+\sum_{\omega\in
\Lambda\setminus\{0\}}(1/(z-\omega)+1/\omega+
z/\omega^2).
\end{equation*}
Since $\zeta'(z;\Lambda)=-\wp(z;\Lambda)$ is periodic with respect to $\Lambda$, for any $\omega\in\Lambda$ there is
a constant $\eta(\omega;\Lambda)$ so that
\begin{equation*}
\zeta(z+\omega;\Lambda)-\zeta(z;\Lambda)=\eta(\omega;\Lambda).
\end{equation*}
Next, we define
the \textit{Siegel function} $g_\mathbf{v}(\tau)$ by
\begin{equation*}
g_\mathbf{v}(\tau)=
e^{-(1/2)(v_1\eta(\tau;[\tau,1])+
v_2\eta(1;[\tau,1]))
(v_1\tau+v_2)}\sigma(v_1\tau+v_2;[\tau,1])\eta(\tau)^2
\quad(\tau\in\mathbb{H}),
\end{equation*}
where
\begin{equation*}
\eta(\tau)=\sqrt{2\pi}\zeta_8q^{1/24}\prod_{n=1}^\infty
(1-q^n)\quad(q=e^{2\pi i\tau},\tau\in\mathbb{H})
\end{equation*}
is the \textit{Dedekind $\eta$-function}.
By the product formula of the
Weierstrass $\sigma$-function
we get the
$q$-product expression
\begin{equation}\label{FourierSiegel}
g_{\mathbf{v}}(\tau)
=-e^{\pi i v_2(v_1-1)}
q^{(1/2)\mathbf{B}_2(v_1)}
(1-q^{v_1}e^{2\pi iv_2})
\prod_{n=1}^\infty (1-q^{n+v_1}e^{2\pi iv_2})
(1-q^{n-v_1}e^{-2\pi iv_2}),
\end{equation}
where $\mathbf{B}_2(X)=X^2-X+1/6$
is the second Bernoulli polynomial
(\cite[Chapter 18, Theorem 4 and Chapter 19, $\S$2]{Lang}).
Furthermore, we have the $q$-order formula
\begin{equation}\label{OrderSiegel}
\mathrm{ord}_{q}(g_\mathbf{v}(\tau))=
(1/2)\mathbf{B}_2(\langle
v_1\rangle),
\end{equation}
where $\langle X\rangle$ is the fractional part of $X\in\mathbb{R}$
such that $0\leq\langle X\rangle<1$ (\cite[Chapter 2, $\S$1]{K-L}).

\begin{lemma}\label{modularity}
Let $\{m(\mathbf{v})\}_{\mathbf{v}=\left[\begin{smallmatrix}v_1\\v_2\end{smallmatrix}\right]\in\mathcal{V}_N}$ be a family of
integers such that $m(\mathbf{v})=0$ except for finitely many $\mathbf{v}$.
If the family satisfies
\begin{eqnarray*}
&&\sum_{\mathbf{v}} m(\mathbf{v})(Nv_1)^2\equiv
\sum_{\mathbf{v}}
m(\mathbf{v})(Nv_2)^2\equiv0\Mod{\gcd(2,N)\cdot N},\\
&&\sum_{\mathbf{v}}
m(\mathbf{v})(Nv_1)(Nv_2)\equiv0\Mod{N},\\
&&\gcd(12,N)\cdot\sum_{\mathbf{v}} m(\mathbf{v})\equiv0\Mod{12},
\end{eqnarray*}
then $\zeta\prod_\mathbf{v}
g_{\mathbf{v}}(\tau)^{m(\mathbf{v})}$
belongs to $\mathcal{F}_N$, where $\zeta=\prod_\mathbf{v}e^{\pi iv_2(1-v_1)m(\mathbf{v})}\in\mathbb{Q}(\zeta_{2N^2})$.
\end{lemma}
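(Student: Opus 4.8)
The plan is to verify, for $G(\tau):=\zeta\prod_\mathbf{v}g_\mathbf{v}(\tau)^{m(\mathbf{v})}$, the three requirements for membership in $\mathcal{F}_N$: that $G$ is a weight-$0$ meromorphic modular form, that it is invariant under $\Gamma(N)$, and that its Fourier coefficients lie in $\mathbb{Q}(\zeta_N)$. Each $g_\mathbf{v}$ is a weight-$0$ function that is holomorphic and nonvanishing on $\mathbb{H}$ (\cite{K-L}), so $G$ is holomorphic on $\mathbb{H}$, and \eqref{OrderSiegel} shows that $G$ has finite order at each cusp; hence $G$ is a weight-$0$ meromorphic modular form. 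It therefore remains to prove the $\Gamma(N)$-invariance and the rationality of the Fourier coefficients.

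For the invariance I would use the two standard transformation rules for Siegel functions (\cite{K-L}, Chapter 2). First, for $\mathbf{b}=\left[\begin{smallmatrix}b_1\\b_2\end{smallmatrix}\right]\in\mathbb{Z}^2$ one has $g_{\mathbf{v}+\mathbf{b}}(\tau)=\varepsilon_1(\mathbf{v},\mathbf{b})\,g_\mathbf{v}(\tau)$ with the root of unity $\varepsilon_1(\mathbf{v},\mathbf{b})=(-1)^{b_1b_2+b_1+b_2}e^{-\pi i(b_1v_2-b_2v_1)}$, which is quadratic in the entries of $\mathbf{b}$ and bilinear in $(\mathbf{v},\mathbf{b})$. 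Second, for $\gamma=\left[\begin{smallmatrix}a&b\\c&d\end{smallmatrix}\right]\in\mathrm{SL}_2(\mathbb{Z})$ one has $g_\mathbf{v}(\gamma\tau)=\varepsilon_2(\gamma)\,g_{{}^t\gamma\mathbf{v}}(\tau)$, where $\varepsilon_2(\gamma)$ is the square of the Dedekind $\eta$-multiplier, a $12$-th root of unity independent of $\mathbf{v}$. Now fix $\gamma\in\Gamma(N)$; since $\gamma\equiv I_2\Mod{N}$ and $\mathbf{v}\in\tfrac1N\mathbb{Z}^2$, we may write ${}^t\gamma\mathbf{v}=\mathbf{v}+\mathbf{b}_\gamma$ with $\mathbf{b}_\gamma\in\mathbb{Z}^2$, and combining the two rules gives
\[
g_\mathbf{v}(\gamma\tau)=\varepsilon_2(\gamma)\,\varepsilon_1(\mathbf{v},\mathbf{b}_\gamma)\,g_\mathbf{v}(\tau).
\]
Raising to the $m(\mathbf{v})$-th power, multiplying over $\mathbf{v}$, and using that $\zeta$ is a constant yields $G(\gamma\tau)=R(\gamma)\,G(\tau)$ with $R(\gamma)=\varepsilon_2(\gamma)^{\sum_\mathbf{v}m(\mathbf{v})}\prod_\mathbf{v}\varepsilon_1(\mathbf{v},\mathbf{b}_\gamma)^{m(\mathbf{v})}$. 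As $\gamma\mapsto R(\gamma)$ is a homomorphism into $\mathbb{C}^\times$, it suffices to show $R(\gamma)=1$ for every $\gamma\in\Gamma(N)$.

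This last point is the crux, and it is exactly where the three hypotheses enter. Writing the entries of $\mathbf{b}_\gamma$ in terms of $a-1,b,c,d-1$ (each divisible by $N$), one finds $b_1v_2-b_2v_1=(a-d)v_1v_2-bv_1^2+cv_2^2$, so that upon multiplying out, the phase part of $R(\gamma)$ collects into a combination of $\sum_\mathbf{v}m(\mathbf{v})(Nv_1)^2$, $\sum_\mathbf{v}m(\mathbf{v})(Nv_2)^2$ and $\sum_\mathbf{v}m(\mathbf{v})(Nv_1)(Nv_2)$, while the $(-1)^{b_1b_2+b_1+b_2}$ signs and the factor $\varepsilon_2(\gamma)^{\sum_\mathbf{v}m(\mathbf{v})}$ supply the remaining contributions. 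The congruences $\sum_\mathbf{v}m(\mathbf{v})(Nv_1)^2\equiv\sum_\mathbf{v}m(\mathbf{v})(Nv_2)^2\equiv0\Mod{\gcd(2,N)N}$ and $\sum_\mathbf{v}m(\mathbf{v})(Nv_1)(Nv_2)\equiv0\Mod{N}$ are precisely what force the quadratic and bilinear contributions to be integral multiples of $2\pi i$, and the congruence $\gcd(12,N)\sum_\mathbf{v}m(\mathbf{v})\equiv0\Mod{12}$ is precisely what trivializes the $\eta$-multiplier part, since $\varepsilon_2$ restricted to $\Gamma(N)$ takes values in the group of $(12/\gcd(12,N))$-th roots of unity (as one already sees from the translation $\tau\mapsto\tau+N$, under which $\eta(\tau)^2$ acquires the factor $e^{\pi iN/6}$, a primitive $(12/\gcd(12,N))$-th root of unity). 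I expect this root-of-unity bookkeeping — in particular disentangling the $\eta$-multiplier contribution from the translation contribution and matching the precise moduli $\gcd(2,N)N$, $N$ and $12$, including the parity subtleties resolved by the $(-1)$ signs — to be the main obstacle.

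Finally, for the Fourier coefficients I would read the $q$-expansion off \eqref{FourierSiegel}. The constant $\zeta=\prod_\mathbf{v}e^{\pi iv_2(1-v_1)m(\mathbf{v})}$ is chosen exactly to cancel the leading phase $-e^{\pi iv_2(v_1-1)}$ of each factor, so that
\[
G(\tau)=(-1)^{\sum_\mathbf{v}m(\mathbf{v})}\prod_\mathbf{v}\Bigl(q^{(1/2)\mathbf{B}_2(v_1)}(1-q^{v_1}e^{2\pi iv_2})\prod_{n\geq1}(1-q^{n+v_1}e^{2\pi iv_2})(1-q^{n-v_1}e^{-2\pi iv_2})\Bigr)^{m(\mathbf{v})}.
\]
Since $Nv_1,Nv_2\in\mathbb{Z}$, each $e^{2\pi iv_2}$ is a power of $\zeta_N$, and multiplying the right-hand side out produces a series in fractional powers of $q$ with all coefficients in $\mathbb{Z}[\zeta_N]$. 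By the $\Gamma(N)$-invariance already established, $G$ is periodic under $\tau\mapsto\tau+N$, so this series is in fact an expansion in $q^{1/N}$; hence every Fourier coefficient of $G$ lies in $\mathbb{Z}[\zeta_N]\subset\mathbb{Q}(\zeta_N)$, and therefore $G\in\mathcal{F}_N$, as asserted.
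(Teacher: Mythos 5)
Your framework is sound and is in fact the standard one: the paper's own proof is simply a citation of \cite[Chapter 3, Theorems 5.2 and 5.3]{K-L} together with (\ref{FourierSiegel}), and your ingredients --- the translation formula $g_{\mathbf{v}+\mathbf{b}}=\varepsilon_1(\mathbf{v},\mathbf{b})g_{\mathbf{v}}$, the $\mathrm{SL}_2(\mathbb{Z})$-formula with the $\eta^2$-multiplier $\varepsilon_2$, the reduction to showing the constant automorphy factor $R(\gamma)$ is trivial on $\Gamma(N)$, and the closing paragraph on how the constant $\zeta$ cancels the leading phases $-e^{\pi i v_2(v_1-1)}$ so that the $q^{1/N}$-expansion has coefficients in $\mathbb{Z}[\zeta_N]$ (which is exactly the role of the reference to (\ref{FourierSiegel}) in the paper) --- are all correct. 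The genuine gap is that the decisive step, $R(\gamma)=1$ for all $\gamma\in\Gamma(N)$, is never proved: you describe its expected shape and explicitly defer the ``root-of-unity bookkeeping'' as ``the main obstacle.'' That bookkeeping \emph{is} the content of the cited Kubert--Lang theorems (a several-page computation, with $N$ odd and $N$ even treated separately), so what you have is a correct reduction followed by an appeal to precisely the unproved core of the lemma.

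Two specific points inside the deferred step need repair. First, the claim that the congruences ``force the quadratic and bilinear contributions to be integral multiples of $2\pi i$'' is false as stated: writing $a-d=N\alpha$, $b=N\beta$, $c=N\kappa$ and $\mathbf{v}=\frac{1}{N}\left[\begin{smallmatrix}r_1\\r_2\end{smallmatrix}\right]$, the bilinear term contributes $e^{-\pi i\alpha k}$ where $Nk=\sum_{\mathbf{v}}m(\mathbf{v})(Nv_1)(Nv_2)$, and for odd $N$ the quadratic terms likewise contribute only signs $\pm1$ (this is why the moduli are $\gcd(2,N)\cdot N$ for the squares but only $N$ for the cross term). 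These residual signs must be shown to cancel against the factors $(-1)^{b_1b_2+b_1+b_2}$, where $b_1=(a-1)v_1+cv_2$ and $b_2=bv_1+(d-1)v_2$; this parity cancellation, which you flag but do not perform, is where essentially all the difficulty lies. Second, your justification that $\varepsilon_2(\Gamma(N))$ lies in the group of $(12/\gcd(12,N))$-th roots of unity from the single element $\tau\mapsto\tau+N$ only gives the containment in the wrong direction: it shows the image \emph{contains} a primitive $(12/\gcd(12,N))$-th root of unity. Since $\varepsilon_2$ is the character of $\mathrm{SL}_2(\mathbb{Z})$ of order $12$, the inclusion you actually need is equivalent to the fact that the image of $\Gamma(N)$ in $\mathrm{SL}_2(\mathbb{Z})^{\mathrm{ab}}\simeq\mathbb{Z}/12\mathbb{Z}$ equals $\gcd(12,N)\mathbb{Z}/12\mathbb{Z}$, i.e., to $\mathrm{SL}_2(\mathbb{Z}/N\mathbb{Z})^{\mathrm{ab}}\simeq\mathbb{Z}/\gcd(12,N)\mathbb{Z}$ --- true, but nontrivial and it must be invoked or proved. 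With these two items supplied (the sign computation carried out case by case in the parity of $N$, and the abelianization fact established), your argument would become a complete self-contained proof, essentially reproducing the Kubert--Lang argument that the paper cites.
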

\begin{proof}
See \cite[Chapter 3, Theorems 5.2 and 5.3]{K-L} and (\ref{FourierSiegel}).
\end{proof}

\begin{proposition}\label{Siegelfamily}
The family
$\{g_\mathbf{v}(\tau)^{12N}\}_{\mathbf{v}\in\mathcal{V}_N}$
is a Fricke family of level $N$.
\end{proposition}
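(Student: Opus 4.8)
The plan is to check that $h_\mathbf{v}(\tau)=g_\mathbf{v}(\tau)^{12N}$ satisfies membership in $\mathcal{F}_N$ together with the three conditions (F1)--(F3). For membership I would apply Lemma \ref{modularity} to the family with $m(\mathbf{v})=12N$ for the single given $\mathbf{v}$ and $m=0$ otherwise. Since $Nv_1,Nv_2\in\mathbb{Z}$, each hypothesis collapses to a trivial divisibility: $12N(Nv_1)^2$ and $12N(Nv_2)^2$ are divisible by $12N$ hence by $\gcd(2,N)\cdot N$, $12N(Nv_1)(Nv_2)\equiv0\Mod{N}$, and $\gcd(12,N)\cdot12N\equiv0\Mod{12}$. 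Lemma \ref{modularity} then gives $\zeta g_\mathbf{v}^{12N}\in\mathcal{F}_N$ with $\zeta=e^{12N\pi iv_2(1-v_1)}$; writing $v_1=a_1/N$, $v_2=a_2/N$ one finds $\zeta=\zeta_N^{-6a_1a_2}\in\mathbb{Q}(\zeta_N)\subset\mathcal{F}_N$, so in fact $g_\mathbf{v}^{12N}\in\mathcal{F}_N$. Condition (F1) is then immediate from the definition: the Weierstrass $\sigma$-function is entire in $z$, the Dedekind $\eta$-function is holomorphic and nowhere vanishing on $\mathbb{H}$, and the exponential prefactor is holomorphic, so $g_\mathbf{v}(\tau)$ and a fortiori $g_\mathbf{v}(\tau)^{12N}$ is holomorphic on $\mathbb{H}$.

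For (F2) I would use two classical transformation properties of Siegel functions. First, the oddness of $\sigma$ yields $g_{-\mathbf{v}}(\tau)=-g_\mathbf{v}(\tau)$, and the even exponent $12N$ erases the sign. Second, for an integral translation $\mathbf{w}\in\mathbb{Z}^2$ the quasi-periodicity (Kubert--Lang) gives $g_{\mathbf{v}+\mathbf{w}}(\tau)=\varepsilon\,g_\mathbf{v}(\tau)$ with $\varepsilon=(-1)^{w_1w_2+w_1+w_2}e^{-\pi i(w_1v_2-w_2v_1)}$ a root of unity. Since $12N(w_1v_2-w_2v_1)=12(w_1Nv_2-w_2Nv_1)$ is an even integer, $\varepsilon^{12N}=1$, whence $g_{\mathbf{v}+\mathbf{w}}^{12N}=g_\mathbf{v}^{12N}$. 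Thus $g_\mathbf{v}^{12N}$ depends only on $\pm\mathbf{v}\Mod{\mathbb{Z}^2}$.

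The heart of the argument is (F3), which I would verify on generators of $\mathrm{Gal}(\mathcal{F}_N/\mathcal{F}_1)\simeq\mathrm{GL}_2(\mathbb{Z}/N\mathbb{Z})/\{\pm I_2\}$, namely the image of $\mathrm{SL}_2(\mathbb{Z})$ and the diagonal matrices $\mathrm{diag}(1,d)$ with $\gcd(d,N)=1$. For $\gamma\in\mathrm{SL}_2(\mathbb{Z})$ the action is $f\mapsto f\circ\gamma$, and the transformation law of the Siegel function takes the shape $g_\mathbf{v}(\gamma\tau)=\epsilon(\gamma)\,g_{{}^t\gamma\mathbf{v}}(\tau)$ with $\epsilon(\gamma)$ a twelfth root of unity arising from the multiplier systems of $\sigma$ and $\eta^2$; the index ${}^t\gamma\mathbf{v}$ appears exactly as in the computation $\wp_\mathbf{v}(\gamma\tau)=(c\tau+d)^2\wp_{{}^t\gamma\mathbf{v}}(\tau)$ that underlies Proposition \ref{Frickefamily}, and raising to the $12N$-th power annihilates $\epsilon(\gamma)$, giving $(g_\mathbf{v}^{12N})^\gamma=g_{{}^t\gamma\mathbf{v}}^{12N}$. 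For $\gamma=\mathrm{diag}(1,d)$ the action is through $\mathrm{Gal}(\mathbb{Q}(\zeta_N)/\mathbb{Q})$ by $\zeta_N\mapsto\zeta_N^d$ on the Fourier coefficients; inspecting (\ref{FourierSiegel}) together with its $12N$-th power, whose coefficients lie in $\mathbb{Q}(\zeta_N)$, the only cyclotomic dependence is through $e^{2\pi iv_2}=\zeta_N^{a_2}\mapsto\zeta_N^{da_2}=e^{2\pi i(dv_2)}$ while the $q$-powers $q^{v_1}$ are fixed, and this is precisely the expansion of $g_{(v_1,dv_2)}^{12N}=g_{{}^t\gamma\mathbf{v}}^{12N}$.

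I expect the main obstacle to be (F3): one must fix the correct normalization of the Siegel transformation formula so that the index is exactly ${}^t\gamma\mathbf{v}$ rather than $\gamma\mathbf{v}$ or $\gamma^{-1}\mathbf{v}$, and then confirm that every root-of-unity multiplier, coming both from the $\mathrm{SL}_2(\mathbb{Z})$ action and from the cyclotomic Galois action, is killed by the $12N$-th power. Indeed the exponent $12N$ is tailored precisely for this cancellation. By comparison, the congruence check feeding Lemma \ref{modularity} and the coefficient bookkeeping in (\ref{FourierSiegel}) are routine.
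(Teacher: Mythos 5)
Your proposal is correct, but be aware that the paper offers no argument of its own here: its entire proof of Proposition \ref{Siegelfamily} is the citation \cite[Chapter 2, Proposition 1.3]{K-L}. What you have done is reconstruct, essentially faithfully, the standard Kubert--Lang verification that the one-line citation points to. Your ingredients are exactly theirs: membership in $\mathcal{F}_N$ via the quadratic-congruence criterion (the paper's Lemma \ref{modularity}, applied with $m(\mathbf{v})=12N$ --- your divisibility checks and the computation $\zeta=\zeta_N^{-6a_1a_2}\in\mathbb{Q}(\zeta_N)$ are both right); (F2) from oddness of $\sigma$ plus the translation formula $g_{\mathbf{v}+\mathbf{w}}=\varepsilon\, g_\mathbf{v}$ (Kubert--Lang's relation K1), where indeed $\varepsilon^{12N}=1$ because $12N(w_1v_2-w_2v_1)$ is an even integer; and (F3) checked on the generators $\mathrm{SL}_2(\mathbb{Z})$ and $\mathrm{diag}(1,d)$ of $\mathrm{GL}_2(\mathbb{Z}/N\mathbb{Z})/\{\pm I_2\}$. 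Two small remarks. First, the one input you flag as ``classical'' in the $\mathrm{SL}_2(\mathbb{Z})$ case --- $g_\mathbf{v}\circ\gamma=\epsilon(\gamma)g_{{}^t\gamma\mathbf{v}}$ with $\epsilon(\gamma)^{12}=1$ --- is precisely the content of the cited Kubert--Lang proposition (seen most cleanly via the Klein form $g_\mathbf{v}/\eta(\tau)^2$, which transforms with trivial multiplier and weight $-1$, so the only multiplier is that of $\eta^2$, a $12$th root of unity); at that point your proof and the paper's citation coincide rather than diverge. Second, your treatment of $\mathrm{diag}(1,d)$ is literally correct and slightly subtler than you acknowledge: the constant prefactor $(-e^{\pi i v_2(v_1-1)})^{12N}=(e^{2\pi i v_2})^{6N(v_1-1)}$ with $6N(v_1-1)=6a_1-6N\in\mathbb{Z}$ is itself an integer power of $e^{2\pi iv_2}=\zeta_N^{a_2}$, which is what makes ``the only cyclotomic dependence is through $e^{2\pi iv_2}$'' true and the coefficientwise action $\zeta_N\mapsto\zeta_N^d$ land exactly on the expansion of $g_{{}^t\gamma\mathbf{v}}^{12N}$. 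Checking (F3) on generators is legitimate since transposition reverses products compatibly with the right Galois action ($h_\mathbf{v}^{\gamma_1\gamma_2}=h_{{}^t\gamma_2{}^t\gamma_1\mathbf{v}}=h_{{}^t(\gamma_1\gamma_2)\mathbf{v}}$), and your establishing (F2) first is what makes $g_{{}^t\gamma\mathbf{v}}^{12N}$ well defined for ${}^t\gamma\mathbf{v}$ determined only modulo $\pm\mathbb{Z}^2$; note also that ${}^t\gamma\mathbf{v}\in\mathcal{V}_N$ since $\gamma$ is invertible modulo $N$. In short: the paper buys brevity by outsourcing everything; your version buys a self-contained proof at the cost of re-deriving (or re-citing) the same two transformation laws.
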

\begin{proof}
See \cite[Chapter 2, Proposition 1.3]{K-L}.
\end{proof}

\begin{lemma}\label{fg}
We further obtain the following results on modular functions.
\begin{itemize}
\item[\textup{(i)}]
If $h(\tau)$ is a weakly holomorphic function in $\mathcal{F}_1$, then it
is a polynomial in $j(\tau)$ over $\mathbb{Q}$.
\item[\textup{(ii)}] We have
$\mathcal{F}_1(
f_{\left[\begin{smallmatrix}0\\1/N\end{smallmatrix}\right]}(\tau))=\mathcal{F}_1(
g_{\left[\begin{smallmatrix}0\\1/N\end{smallmatrix}\right]}(\tau)^{12N})$.
\item[\textup{(iii)}] We get the relation
\begin{equation*}
f_{\left[\begin{smallmatrix}0\\1/N\end{smallmatrix}\right]}(\tau)=
\frac{p(j(\tau),g_{\left[\begin{smallmatrix}0\\1/N\end{smallmatrix}\right]}(\tau)^{12N})}
{\mathrm{disc}(
g_{\left[\begin{smallmatrix}0\\1/N\end{smallmatrix}\right]}(\tau)^{12N},\mathcal{F}_1)}
\quad\textrm{for some polynomial $p(X,Y)\in\mathbb{Q}[X,Y]$}.
\end{equation*}
\end{itemize}
\end{lemma}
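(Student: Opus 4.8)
The plan is to treat the three parts separately, leaning on the identification $\mathcal{F}_1=\mathbb{Q}(j)$, the Galois correspondence for $\mathcal{F}_N/\mathcal{F}_1$, and a classical fact that the discriminant of a minimal polynomial clears the denominators of integral elements. For (i), I would write $h(\tau)=P(j(\tau))/Q(j(\tau))$ with coprime $P,Q\in\mathbb{Q}[X]$, which is possible since $\mathcal{F}_1=\mathbb{Q}(j)$. As $j\colon\mathbb{H}\to\mathbb{C}$ is surjective, any zero $c$ of $Q$ equals $j(\tau_0)$ for some $\tau_0\in\mathbb{H}$, at which $h$ would have a pole by coprimality, contradicting weak holomorphy. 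Hence $Q$ is constant and $h(\tau)\in\mathbb{Q}[j(\tau)]$.

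For (ii), write $\mathbf{e}=\left[\begin{smallmatrix}0\\1/N\end{smallmatrix}\right]$. By (F2)--(F3), for $\gamma\in\mathrm{Gal}(\mathcal{F}_N/\mathcal{F}_1)\simeq\mathrm{GL}_2(\mathbb{Z}/N\mathbb{Z})/\{\pm I_2\}$ one has $f_{\mathbf{e}}(\tau)^{\gamma}=f_{{}^{t}\gamma\mathbf{e}}(\tau)$ and $(g_{\mathbf{e}}(\tau)^{12N})^{\gamma}=g_{{}^{t}\gamma\mathbf{e}}(\tau)^{12N}$, each equal to the original member precisely when ${}^{t}\gamma\mathbf{e}\equiv\pm\mathbf{e}\Mod{\mathbb{Z}^2}$. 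Thus both functions share the identical stabilizer $H$ in the Galois group. Since $\mathcal{F}_1(\alpha)$ is the fixed field of the stabilizer of $\alpha$ for any $\alpha\in\mathcal{F}_N$ (the index of the stabilizer equals the number of distinct conjugates, hence $[\mathcal{F}_1(\alpha):\mathcal{F}_1]$), the two fixed fields of $H$ coincide, giving $\mathcal{F}_1(f_{\mathbf{e}}(\tau))=\mathcal{F}_1(g_{\mathbf{e}}(\tau)^{12N})$.

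For (iii), set $R=\mathbb{Q}[j]$ and $G=g_{\mathbf{e}}(\tau)^{12N}$. I would first show that $G$ and $f_{\mathbf{e}}(\tau)$ are integral over $R$: the minimal polynomial $m(Y)$ of $G$ over $\mathcal{F}_1$ is the product of $(Y-G^{\gamma})$ over the distinct conjugates $G^{\gamma}=g_{{}^{t}\gamma\mathbf{e}}(\tau)^{12N}$, all of which are weakly holomorphic, so its coefficients are weakly holomorphic elements of $\mathcal{F}_1$ and hence lie in $R$ by (i); the same argument applies to $f_{\mathbf{e}}(\tau)$. Invoking (ii) to get $f_{\mathbf{e}}(\tau)\in\mathcal{F}_1(G)$ of degree $d=\deg m$, I would expand $f_{\mathbf{e}}(\tau)$ in the dual basis of the power basis $1,G,\dots,G^{d-1}$ under the trace form, namely $f_{\mathbf{e}}(\tau)=m'(G)^{-1}\sum_{i}\mathrm{Tr}(f_{\mathbf{e}}(\tau)G^{i})\,c_i(G)$, where $m(Y)/(Y-G)=\sum_i c_i(G)Y^i$ has each $c_i(G)\in R[G]$ by synthetic division. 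Since $R$ is integrally closed, every trace $\mathrm{Tr}(f_{\mathbf{e}}(\tau)G^{i})$ lies in $R$. Finally, the Bézout/resultant identity $Um+Vm'=\mathrm{Res}(m,m')=\pm\,\mathrm{disc}(m)$ in $R[Y]$, evaluated at $Y=G$, yields $\mathrm{disc}(m)\,m'(G)^{-1}=\pm V(G)\in R[G]$; multiplying through gives $\mathrm{disc}(m)\,f_{\mathbf{e}}(\tau)\in R[G]$, that is, $f_{\mathbf{e}}(\tau)=p(j(\tau),G)/\mathrm{disc}(G,\mathcal{F}_1)$ for some $p\in\mathbb{Q}[X,Y]$.

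I expect the main obstacle to lie in (iii): establishing integrality over $\mathbb{Q}[j]$ --- which rests on (i) together with the weak holomorphy of every conjugate --- and then correctly deploying the discriminant denominator bound, in particular using that $\mathbb{Q}[j]$ is integrally closed so that traces of integral elements remain in $R$. Parts (i) and (ii) are comparatively routine once the surjectivity of $j$ and the Galois correspondence are in hand.
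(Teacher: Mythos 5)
Your parts (i) and (iii) are correct. For (iii) you take a slightly different route from the paper: the paper writes $f=c_0+c_1g+\cdots+c_{\ell-1}g^{\ell-1}$, multiplies by powers of $g$, takes traces, and solves the resulting linear system by Cramer's rule, identifying the determinant of the trace Gram matrix $T$ with $\mathrm{disc}(g,\mathcal{F}_1)$ via a Vandermonde factorization; you instead use Euler's dual-basis formula for the trace form together with the B\'ezout/resultant identity $Um+Vm'=\pm\,\mathrm{disc}(m)$ in $\mathbb{Q}[j][Y]$ to get $\mathrm{disc}(m)/m'(G)\in\mathbb{Q}[j][G]$. Both arguments rest on the same inputs --- every conjugate of $f$ and $g$ is weakly holomorphic by (F1) and (F3), so by (i) the coefficients of $\min(g,\mathcal{F}_1)$ and the relevant traces lie in $\mathbb{Q}[j]$, which is integrally closed --- and both conclude $\mathrm{disc}\cdot f\in\mathbb{Q}[j][g]$. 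Your version is an equally valid, standard alternative; the paper's Cramer computation is arguably more self-contained since it produces the discriminant as $\det(T)$ on the spot.

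Part (ii), however, has a genuine gap. You assert, citing (F2)--(F3), that $f_{{}^t\gamma\mathbf{e}}(\tau)=f_{\mathbf{e}}(\tau)$ and $g_{{}^t\gamma\mathbf{e}}(\tau)^{12N}=g_{\mathbf{e}}(\tau)^{12N}$ hold \emph{precisely} when ${}^t\gamma\mathbf{e}\equiv\pm\mathbf{e}\Mod{\mathbb{Z}^2}$, so that the two stabilizers coincide. But (F2) is a one-way implication: it gives equality of the functions when the vectors agree up to sign modulo $\mathbb{Z}^2$, and says nothing about the converse, which is exactly where the content of (ii) lies. From (F2)--(F3) alone you obtain only $\mathrm{Stab}(f_{\mathbf{e}})\subseteq\mathrm{Stab}(g_{\mathbf{e}}^{12N})$ --- provided you already know $\mathrm{Stab}(f_{\mathbf{e}})=\{\gamma:{}^t\gamma\mathbf{e}\equiv\pm\mathbf{e}\Mod{\mathbb{Z}^2}\}$, which itself requires the injectivity of $\wp(\,\cdot\,;[\tau,1])$ up to sign modulo the lattice (Lemma \ref{pval}, which you never invoke) --- and hence only the easy inclusion $\mathcal{F}_1(g_{\mathbf{e}}^{12N})\subseteq\mathcal{F}_1(f_{\mathbf{e}})$. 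For the Siegel family the converse is not at all formal: a priori two classes $\pm\mathbf{u}\not\equiv\pm\mathbf{v}\Mod{\mathbb{Z}^2}$ could have $g_{\mathbf{u}}^{12N}=g_{\mathbf{v}}^{12N}$ (all the more so after raising to the $12N$-th power), and there is no analogue of Lemma \ref{pval} available. The paper closes precisely this point by a two-cusp $q$-order argument: from $g_{\left[\begin{smallmatrix}c/N\\d/N\end{smallmatrix}\right]}(\tau)^{12N}=g_{\left[\begin{smallmatrix}0\\1/N\end{smallmatrix}\right]}(\tau)^{12N}$ it applies $\left[\begin{smallmatrix}0&-1\\1&0\end{smallmatrix}\right]$ to produce a second identity, compares $q$-orders via (\ref{OrderSiegel}) to get $\mathbf{B}_2(\langle c/N\rangle)=\mathbf{B}_2(0)$ and $\mathbf{B}_2(\langle d/N\rangle)=\mathbf{B}_2(1/N)$, and reads off $c\equiv0$, $d\equiv\pm1\Mod{N}$ from the shape of $\mathbf{B}_2$. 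Some such argument (or an appeal to the distinctness of Siegel functions at the cusps as in Kubert--Lang) must be supplied; as written, your ``precisely when'' begs the question at the heart of the lemma.
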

\begin{proof}
(i) See \cite[Chapter 5, Theorem 2]{Lang}.\\
(ii) Let
$L=\mathcal{F}_1(f_{\left[\begin{smallmatrix}0\\1/N\end{smallmatrix}\right]}(\tau))$ and
$R=\mathcal{F}_1(g_{\left[\begin{smallmatrix}0\\1/N\end{smallmatrix}\right]}(\tau)^{12N})$,
which are intermediate subfields of the extension $\mathcal{F}_N/\mathcal{F}_1$
by Propositions \ref{Frickefamily} and \ref{Siegelfamily}.
Let $\gamma=\left[\begin{matrix}x&y\\z&w\end{matrix}\right]\in\mathrm{GL}_2(\mathbb{Z}/N\mathbb{Z})/
\{\pm I_2\}\simeq\mathrm{Gal}(\mathcal{F}_N/\mathcal{F}_1)$. Then we deduce that
\begin{eqnarray*}
\gamma\in\mathrm{Gal}(\mathcal{F}_N/L)&\Longleftrightarrow&
f_{\left[\begin{smallmatrix}0\\1/N\end{smallmatrix}\right]}(\tau)^\gamma=
f_{\left[\begin{smallmatrix}0\\1/N\end{smallmatrix}\right]}(\tau)\\
&\Longleftrightarrow&
f_{{^t}\gamma\left[\begin{smallmatrix}0\\1/N\end{smallmatrix}\right]}(\tau)=
f_{\left[\begin{smallmatrix}0\\1/N\end{smallmatrix}\right]}(\tau)\quad\textrm{by Proposition \ref{Frickefamily}, (F2) and (F3)}\\
&\Longleftrightarrow&
f_{\left[\begin{smallmatrix}z/N\\w/N\end{smallmatrix}\right]}(\tau)=
f_{\left[\begin{smallmatrix}0\\1/N\end{smallmatrix}\right]}(\tau)\\
&\Longleftrightarrow&
\wp((z/N)\tau+w/N;[\tau,1])=\wp(1/N;[\tau,1])\quad\textrm{by the definitions (\ref{pv}) and (\ref{Fricke})}\\
&\Longleftrightarrow&
(z/N)\tau+w/N\equiv\pm1/N\Mod{[\tau,1]}\quad\textrm{by Lemma \ref{pval}}\\
&\Longleftrightarrow&z\equiv0,~
w\equiv\pm1\Mod{N}.
\end{eqnarray*}
Thus we obtain
\begin{equation}\label{GalL}
\mathrm{Gal}(\mathcal{F}_N/L)=\{\gamma\in\mathrm{GL}_2(\mathbb{Z}/N\mathbb{Z})~|~
\gamma\equiv\pm\left[\begin{matrix}* & * \\ 0 & 1\end{matrix}\right]\Mod{N}\}/\{\pm I_2\},
\end{equation}
and it follows from Proposition \ref{Siegelfamily}, (F2) and (F3) that every element of $\mathrm{Gal}(\mathcal{F}_N/L)$
leaves $g_{\left[\begin{smallmatrix}0\\1/N\end{smallmatrix}\right]}(\tau)^{12N}$ fixed.
This implies that
$\mathrm{Gal}(\mathcal{F}_N/L)\subseteq\mathrm{Gal}(\mathcal{F}_N/R)$.
\par
Conversely, let
$\rho=\left[\begin{matrix}a&b\\c&d\end{matrix}\right]\in\mathrm{Gal}(\mathcal{F}_N/R)$.
We then derive by Proposition \ref{Siegelfamily}, (F2) and (F3) that
\begin{equation}\label{eq1}
g_{\left[\begin{smallmatrix}c/N\\d/N\end{smallmatrix}\right]}(\tau)^{12N}
=g_{\left[\begin{smallmatrix}0\\1/N\end{smallmatrix}\right]}(\tau)^{12N}.
\end{equation}
The action of $\left[\begin{matrix}0 &-1\\1&0\end{matrix}\right]$ on
both sides of (\ref{eq1}) yields
\begin{equation}\label{eq2}
g_{\left[\begin{smallmatrix}d/N\\-c/N\end{smallmatrix}\right]}(\tau)^{12N}
=g_{\left[\begin{smallmatrix}1/N\\0\end{smallmatrix}\right]}(\tau)^{12N}.
\end{equation}
By applying the $q$-order formula (\ref{OrderSiegel}) to the expressions (\ref{eq1}) and (\ref{eq2})
we attain
\begin{equation*}
6N\mathbf{B}_2(\langle c/N\rangle)=6N\mathbf{B}_2(0)\quad\textrm{and}\quad
6N\mathbf{B}_2(\langle d/N\rangle)=6N\mathbf{B}_2(1/N).
\end{equation*}
Now, we deduce
by the shape of the graph $Y=\mathbf{B}_2(X)$
that
\begin{equation*}
c\equiv0,~d\equiv\pm1\Mod{N}.
\end{equation*}
This, together with (\ref{GalL}), shows that $\mathrm{Gal}(\mathcal{F}_N/L)\supseteq\mathrm{Gal}(\mathcal{F}_N/R)$. Thus we achieve $\mathrm{Gal}(\mathcal{F}_N/L)=\mathrm{Gal}(\mathcal{F}_N/R)$, and hence
$L=R$, as desired. \\
(iii) For simplicity, let $f=f_{\left[\begin{smallmatrix}0\\1/N\end{smallmatrix}\right]}(\tau)$
and $g=g_{\left[\begin{smallmatrix}0\\1/N\end{smallmatrix}\right]}(\tau)^{12N}$.
By (i) we can express $f$ as
\begin{equation*}
f=c_0+c_1g+\cdots+c_{\ell-1}g^{\ell-1}\quad\textrm{for some}~c_0,c_1,\ldots,c_{\ell-1}\in
\mathcal{F}_1,
\end{equation*}
where $\ell=[\mathcal{F}_1(g):\mathcal{F}_1]$.
Multiplying
both sides by $g^k$ ($k=0,1,\ldots,\ell-1$)
and taking traces $\mathrm{Tr}=\mathrm{Tr}_{\mathcal{F}_1(g)/\mathcal{F}_1}$
yields
\begin{equation*}
\mathrm{Tr}(fg^k)=c_0\mathrm{Tr}(g^k)+c_1\mathrm{Tr}(g^{k+1})
+\cdots
+c_{\ell-1}\mathrm{Tr}(g^{k+\ell-1}).
\end{equation*}
So we obtain a linear system (in unknowns $c_0,c_1,c_2,\ldots, c_{\ell-1}$)
\begin{equation*}
T\left[\begin{matrix}c_0\\c_1\\
\vdots\\c_{\ell-1}
\end{matrix}\right]=
\left[\begin{matrix}\mathrm{Tr}(f)\\
\mathrm{Tr}(fg)\\
\vdots\\
\mathrm{Tr}(fg^{\ell-1})\end{matrix}\right],~\textrm{where}~T=
\left[\begin{matrix}
\mathrm{Tr}(1) & \mathrm{Tr}(g) & \cdots & \mathrm{Tr}(g^{\ell-1})\\
\mathrm{Tr}(g) & \mathrm{Tr}(g^2) & \cdots & \mathrm{Tr}(g^\ell)\\
\vdots & \vdots &\ddots& \vdots\\
\mathrm{Tr}(g^{\ell-1}) & \mathrm{Tr}(g^{\ell}) & \cdots
& \mathrm{Tr}(g^{2\ell-2})
\end{matrix}\right].
\end{equation*}
Since $f$ and $g$ are weakly holomorphic by (F1), so are all entries of the augmented matrix of the
above linear system. Thus we get by (i) that
\begin{equation*}
c_0,c_1,\ldots, c_{\ell-1}\in(1/\det(T))\mathbb{Q}[j].
\end{equation*}
On the other hand, let $g_1,g_2,\ldots,g_\ell$ be all the zeros of
$\min(g,\mathcal{F}_1)$. Then we see that
\begin{eqnarray*}
\det(T)&=&\left|\begin{matrix}\sum_{k=1}^\ell g_k^0
& \sum_{k=1}^\ell g_k^1 & \cdots & \sum_{k=1}^\ell g_k^{\ell-1}\\
\sum_{k=1}^\ell g_k^1
& \sum_{k=1}^\ell g_k^2 & \cdots & \sum_{k=1}^\ell g_k^{\ell}\\
\vdots
& \vdots & \ddots & \vdots\\
\sum_{k=1}^\ell g_k^{\ell-1}
& \sum_{k=1}^\ell g_k^\ell & \cdots & \sum_{k=1}^\ell g_k^{2\ell-2}\\
\end{matrix}\right|\\
&=&\left|\begin{matrix}
g_1^0 & g_2^0 & \cdots & g_\ell^0\\
g_1^1 & g_2^1 & \cdots & g_\ell^1\\
\vdots & \vdots & \ddots & \vdots\\
g_1^{\ell-1} & g_2^{\ell-1} & \cdots & g_\ell^{\ell-1}
\end{matrix}\right|
\cdot
\left|\begin{matrix}
g_1^0 & g_1^1 & \cdots & g_1^{\ell-1}\\
g_2^0 & g_2^1 & \cdots & g_2^{\ell-1}\\
\vdots& \vdots & \ddots & \vdots\\
g_\ell^0 & g_\ell^1 & \cdots & g_\ell ^{\ell-1}
\end{matrix}\right|\\
&=&\prod_{1\leq k_1<k_2\leq \ell}(g_{k_1}-g_{k_2})^2\quad\textrm{by the Vandermonde determinant formula}\\
&=&\mathrm{disc}(g,\mathcal{F}_1).
\end{eqnarray*}
This proves (iii).
\end{proof}

\begin{remark}\label{zeros}
Define an equivalence relation $\sim$ on $\mathcal{V}_N$ as follows:
\begin{equation*}
\mathbf{u}\sim\mathbf{v}\quad\textrm{if and only if}\quad
\mathbf{u}\equiv\pm\mathbf{v}\Mod{\mathbb{Z}^2}.
\end{equation*}
Then, in a similar way as in the proof of Lemma \ref{fg}(ii), one can readily show that
\begin{equation*}
f_\mathbf{v}(\tau)~\textrm{and}~
g_\mathbf{v}(\tau)^{12N}\quad\textrm{for}~\mathbf{v}\in\mathcal{V}_N/\sim
\end{equation*}
represent all the distinct zeros of
$\min(f_{\left[\begin{smallmatrix}0\\1/N\end{smallmatrix}\right]}(\tau),\mathcal{F}_1)$
and
$\min(g_{\left[\begin{smallmatrix}0\\1/N\end{smallmatrix}\right]}(\tau)^{12N},\mathcal{F}_1)$, respectively.
\end{remark}

\section {Generation of class fields}\label{invariantdef}

Let $K$ be an imaginary quadratic field and $\mathcal{O}_K$ be its ring of integers.
Let $\mathfrak{n}$ be a proper nontrivial ideal of $\mathcal{O}_K$, $N$ ($\geq2$) be the
smallest positive integer in $\mathfrak{n}$ and $C$ be a ray class
in the ray class group $\mathrm{Cl}(\mathfrak{n})$ of $K$ modulo $\mathfrak{n}$.
We take an integral ideal $\mathfrak{c}$ in the class $C$ and let
\begin{eqnarray*}
\mathfrak{n}\mathfrak{c}^{-1}&=&[\omega_1,\omega_2]\quad\textrm{for some}~ \omega_1,\omega_2\in\mathbb{C}~\textrm{with}~\omega=\omega_1/\omega_2\in\mathbb{H},\\
1&=&(a/N)\omega_1+(b/N)\omega_2\quad\textrm{for some}~a,b\in\mathbb{Z}.
\end{eqnarray*}
For a given Fricke family $\{h_\mathbf{v}(\tau)\}_{\mathbf{v}\in\mathcal{V}_N}$ of level $N$, we define the \textit{Fricke invariant} $h_\mathfrak{n}(C)$ modulo $\mathfrak{n}$ at $C$ by
\begin{equation}\label{definvariant}
h_\mathfrak{n}(C)=h_{\left[\begin{smallmatrix}a/N\\b/N\end{smallmatrix}\right]}
(\omega).
\end{equation}
This value depends only on $\mathfrak{n}$ and $C$, not on the choices of $\mathfrak{c}$, $\omega_1$ and $\omega_2$ (\cite[Chapter 11, $\S$1]{K-L}).

\begin{proposition}\label{invariant}
The Fricke invariant $h_\mathfrak{n}(C)$ lies in the ray class field $K_\mathfrak{n}$
of $K$ modulo $\mathfrak{n}$ and satisfies the following transformation formula:
\begin{equation*}
h_\mathfrak{n}(C)^{\sigma_\mathfrak{n}(C')}=h_\mathfrak{n}(CC')\quad\textrm{for any class}~C'\in\mathrm{Cl}(\mathfrak{n}),
\end{equation*}
where $\sigma_\mathfrak{n}:\mathrm{Cl}(\mathfrak{n})\rightarrow\mathrm{Gal}(K_\mathfrak{n}/K)$ is the Artin reciprocity map.
Furthermore, the algebraic number $g_\mathfrak{n}^{12N}(C)/g_\mathfrak{n}^{12N}(C')$ is a unit.
\end{proposition}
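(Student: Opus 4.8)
The plan is to derive all three assertions from the main theorem of complex multiplication, in the shape of Shimura's reciprocity law for the values of modular functions of level $N$ at imaginary quadratic arguments. First I would record, from (\ref{definvariant}), the description $h_\mathfrak{n}(C)=h_{\mathbf{v}}(\omega)$ with $\mathbf{v}=\left[\begin{smallmatrix}a/N\\b/N\end{smallmatrix}\right]$, together with the classical facts that such a value lies in the maximal abelian extension $K^{\mathrm{ab}}$ of $K$ (being the value of a function in $\mathcal{F}_N$ at a CM point) and that it is independent of the auxiliary choices, as already cited from \cite[Chapter 11]{K-L}. With this in hand, both the membership in $K_\mathfrak{n}$ and the transformation law will follow once the action of the Artin symbols on $h_\mathbf{v}(\omega)$ is made explicit.

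For the transformation formula, fix $C'\in\mathrm{Cl}(\mathfrak{n})$ and choose an integral ideal $\mathfrak{a}$ in $C'$ prime to $\mathfrak{n}$, so that $\sigma_\mathfrak{n}(C')$ is the restriction to $K_\mathfrak{n}$ of $(\mathfrak{a},K^{\mathrm{ab}}/K)$. Applying Shimura's reciprocity to $h_\mathbf{v}(\omega)$, the action of $(\mathfrak{a},K^{\mathrm{ab}}/K)$ splits into two effects: it replaces the lattice $\mathfrak{n}\mathfrak{c}^{-1}$ by $\mathfrak{a}^{-1}\mathfrak{n}\mathfrak{c}^{-1}=\mathfrak{n}(\mathfrak{a}\mathfrak{c})^{-1}$, a lattice attached to the class $CC'$ since $\mathfrak{a}\mathfrak{c}\in CC'$, and it twists the index vector by a matrix in $\mathrm{GL}_2(\mathbb{Z}/N\mathbb{Z})$ which, by (F2) and (F3), carries $h_\mathbf{v}$ to $h_{\mathbf{v}'}$ for the appropriate $\mathbf{v}'$. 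I would then verify that the resulting pair $(\mathbf{v}',\omega')$ is precisely the data defining $h_\mathfrak{n}(CC')$: concretely, that the marked $N$-torsion point recorded by $1=(a/N)\omega_1+(b/N)\omega_2$ is transported to the corresponding relation for $\mathfrak{n}(\mathfrak{a}\mathfrak{c})^{-1}$. This yields $h_\mathfrak{n}(C)^{\sigma_\mathfrak{n}(C')}=h_\mathfrak{n}(CC')$ for every $C'$. Specializing $\mathfrak{a}$ to the principal ray class gives $h_\mathfrak{n}(C)^{\sigma}=h_\mathfrak{n}(C)$ for all $\sigma\in\mathrm{Gal}(K^{\mathrm{ab}}/K_\mathfrak{n})$, so that $h_\mathfrak{n}(C)\in K_\mathfrak{n}$.

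For the final assertion I would first recast it Galois-theoretically. By the transformation formula, $g_\mathfrak{n}^{12N}(C')=\bigl(g_\mathfrak{n}^{12N}(C)\bigr)^{\sigma_\mathfrak{n}(C^{-1}C')}$, so the two invariants are conjugate over $K$; hence their quotient is a unit exactly when they generate the same fractional ideal of $K_\mathfrak{n}$, and it suffices to show that the ideal $\bigl(g_\mathfrak{n}^{12N}(C)\bigr)$ is stable under $\mathrm{Gal}(K_\mathfrak{n}/K)$. I would establish this prime by prime. Since the $q$-product (\ref{FourierSiegel}) shows that $g_\mathbf{v}(\tau)^{12N}$ is holomorphic and non-vanishing on $\mathbb{H}$, its CM value is an $S$-unit, where $S$ is the set of primes of $K_\mathfrak{n}$ above $\mathfrak{n}$, so the valuation of $g_\mathfrak{n}^{12N}(C)$ vanishes outside $S$. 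At a prime $\mathfrak{P}\in S$ lying over $\mathfrak{p}\mid\mathfrak{n}$, the valuation is governed by the order formula (\ref{OrderSiegel}) through a Tate-curve analysis, and depends only on $\mathfrak{p}$ and $\mathfrak{n}$, neither on the class $C$ nor on the choice of $\mathfrak{P}$ over $\mathfrak{p}$. Therefore $\bigl(g_\mathfrak{n}^{12N}(C)\bigr)$ is the extension of an ideal of $K$, hence $\mathrm{Gal}(K_\mathfrak{n}/K)$-stable, and the quotient is a unit.

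The step I expect to be the main obstacle is the explicit bookkeeping in Shimura's reciprocity law: reconciling its transpose and inverse conventions with those built into (F3), and coping with the fact that $\omega$ generates a proper fractional $\mathcal{O}_K$-ideal rather than $\mathcal{O}_K$ itself, so that the law must be normalized for this lattice and the transport of the torsion point $1$ verified carefully. For the unit statement, the delicate point is the local valuation computation at the primes dividing $\mathfrak{n}$, where one must show the valuation is insensitive to $C$; this is exactly where the fine arithmetic of Siegel units (\cite[Chapter 11]{K-L}) is needed.
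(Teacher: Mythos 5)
Your first two paragraphs are fine and, in fact, reconstruct exactly what the paper relies on: the paper offers no argument of its own for this proposition, citing \cite[Chapter 11, Theorems 1.1 and 1.2]{K-L}, and the proof there of membership in $K_\mathfrak{n}$ and of the formula $h_\mathfrak{n}(C)^{\sigma_\mathfrak{n}(C')}=h_\mathfrak{n}(CC')$ is precisely the idelic main theorem of complex multiplication applied to the data $(1,\mathfrak{n}\mathfrak{c}^{-1})$, with the bookkeeping you describe (choose the idele $s$ with $\mathrm{id}(s)=\mathfrak{a}$ and components $1$ at the primes dividing $\mathfrak{n}$, so that the marked $\mathfrak{n}$-torsion point $1+\mathfrak{n}\mathfrak{c}^{-1}$ is transported to $1+\mathfrak{n}(\mathfrak{a}\mathfrak{c})^{-1}$, and absorb the change of basis via (F2)--(F3) and the already-cited well-definedness of $h_\mathfrak{n}(C)$). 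Your descent from $K^{\mathrm{ab}}$ to $K_\mathfrak{n}$ via Frobenii of primes in the principal ray class is also legitimate (Chebotarev at each finite level).

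The genuine gap is in your third paragraph, on the unit claim. First, a repairable slip: holomorphy and non-vanishing of $g_\mathbf{v}(\tau)^{12N}$ on $\mathbb{H}$ does \emph{not} by itself make the CM value an $S$-unit; you need the integrality of $g_\mathbf{v}(\tau)^{\pm12N}$ over $\mathbb{Z}[j(\tau)]$ away from $N$ (\cite[Chapter 2, Theorem 2.2]{K-L}, which the paper invokes in Remark 6.3) together with the integrality of $j(\omega)$, and the correct exceptional set $S$ is the set of primes of $K_\mathfrak{n}$ above the rational primes dividing $N$, not the primes above $\mathfrak{n}$ --- e.g.\ for $\mathfrak{n}=\mathfrak{p}^e$ with $p$ split, the conjugate prime $\bar{\mathfrak{p}}$ divides $N$ but not $\mathfrak{n}$, so your localization misses primes that must be controlled. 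More seriously, the step you rely on --- computing $v_\mathfrak{P}(g_\mathfrak{n}^{12N}(C))$ for $\mathfrak{P}\in S$ ``by the order formula (\ref{OrderSiegel}) through a Tate-curve analysis'' --- would fail: a CM elliptic curve has potentially good reduction at \emph{every} prime (equivalently, $j(\omega)$ is an algebraic integer), so the CM point never reduces to the cusp $\mathfrak{P}$-adically and there is no Tate parameter through which the cusp order (\ref{OrderSiegel}) can be converted into a $\mathfrak{P}$-adic valuation. The assertion that the valuation is independent of $C$ and of the choice of $\mathfrak{P}$ over $\mathfrak{p}$ is true, but it is essentially equivalent to the proposition itself (granting the transformation formula, the two statements imply one another via the transitive Galois action on classes and on primes over $\mathfrak{p}$), and its actual proof is the nontrivial prime-factorization theory of singular values of Klein/Siegel forms --- division values $\wp(u)-\wp(v)$, $\Delta$-quotients, and the formal group of the \emph{good} reduction at $\mathfrak{P}$ --- carried out in \cite[Chapter 12]{K-L} (after Robert), not a routine transfer of the $q$-expansion data. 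As written, your argument assumes the hard content rather than proving it.
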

\begin{proof}
See \cite[Chapter 11, Theorems 1.1 and 1.2]{K-L}.
\end{proof}

\begin{theorem}\label{Frickesingular}
Assume that $K$ is different from $\mathbb{Q}(\sqrt{-1})$ and $\mathbb{Q}(\sqrt{-3})$.
Then the first Fricke invariant $f_\mathfrak{n}(C)$ generates $K_\mathfrak{n}$ over the Hilbert class field $H_K$ of $K$.
\end{theorem}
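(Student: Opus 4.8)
The plan is to run a Galois-theoretic argument built on Proposition \ref{invariant}. Since $f_\mathfrak{n}(C)\in K_\mathfrak{n}$ and $H_K$ is the ray class field of $K$ modulo the trivial ideal, class field theory identifies $\mathrm{Gal}(K_\mathfrak{n}/H_K)$ with $\sigma_\mathfrak{n}(\mathcal{H})$, where $\mathcal{H}$ is the kernel of the natural surjection $\mathrm{Cl}(\mathfrak{n})\to\mathrm{Cl}(\mathcal{O}_K)$ onto the ideal class group. Under the hypothesis $K\neq\mathbb{Q}(\sqrt{-1}),\mathbb{Q}(\sqrt{-3})$ we have $\mathcal{O}_K^\times=\{\pm1\}$, so the standard exact sequence $\mathcal{O}_K^\times\to(\mathcal{O}_K/\mathfrak{n})^\times\to\mathrm{Cl}(\mathfrak{n})\to\mathrm{Cl}(\mathcal{O}_K)\to1$ gives $\mathcal{H}\simeq(\mathcal{O}_K/\mathfrak{n})^\times/\{\pm1\}$; concretely every $C'\in\mathcal{H}$ is represented by a principal integral ideal $(\mu)$ with $\mu\in\mathcal{O}_K$ coprime to $\mathfrak{n}$, and $C'$ is the trivial class if and only if $\mu\equiv\pm1\Mod{\mathfrak{n}}$. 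By Galois theory, $H_K(f_\mathfrak{n}(C))=K_\mathfrak{n}$ follows once I show the stabilizer of $f_\mathfrak{n}(C)$ in $\mathrm{Gal}(K_\mathfrak{n}/H_K)$ is trivial; by the transformation formula $f_\mathfrak{n}(C)^{\sigma_\mathfrak{n}(C')}=f_\mathfrak{n}(CC')$ this amounts to proving that $f_\mathfrak{n}(CC')=f_\mathfrak{n}(C)$ forces $C'$ to be trivial.

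The heart of the argument is an explicit computation of both $f_\mathfrak{n}(C)$ and $f_\mathfrak{n}(CC')$ as values of the Weber function attached to the single lattice $\Lambda=\mathfrak{n}\mathfrak{c}^{-1}$, where I choose the representative $\mathfrak{c}\in C$ integral and coprime to $\mathfrak{n}$. Writing $\mathfrak{n}\mathfrak{c}^{-1}=[\omega_1,\omega_2]$ and $1=(a/N)\omega_1+(b/N)\omega_2$ as in the definition, the identity $(a/N)\omega+b/N=1/\omega_2$ together with the degree-$0$ homogeneity of $-2^73^5(g_2g_3/\Delta)\wp$ gives $f_\mathfrak{n}(C)=-2^73^5(g_2(\Lambda)g_3(\Lambda)/\Delta(\Lambda))\wp(1;\Lambda)$. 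Taking $\mu\mathfrak{c}$ as an integral representative of $CC'$ and repeating the computation with $\mathfrak{n}(\mu\mathfrak{c})^{-1}=\mu^{-1}\Lambda$, the same homogeneity yields $f_\mathfrak{n}(CC')=-2^73^5(g_2(\Lambda)g_3(\Lambda)/\Delta(\Lambda))\wp(\mu;\Lambda)$. Hence $f_\mathfrak{n}(CC')=f_\mathfrak{n}(C)$ is equivalent to $\wp(\mu;\Lambda)=\wp(1;\Lambda)$, and Lemma \ref{pval} (note $1,\mu\notin\Lambda$, since otherwise $\mathfrak{c}\subseteq\mathfrak{n}$ resp. $\mathfrak{n}\mid(\mu)$, contradicting coprimality) converts this into $\mu\equiv\pm1\Mod{\Lambda}$.

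It remains to descend the congruence from $\Lambda=\mathfrak{n}\mathfrak{c}^{-1}$ to $\mathfrak{n}$: multiplying $\mu\mp1\in\mathfrak{n}\mathfrak{c}^{-1}$ by $\mathfrak{c}$ gives $(\mu\mp1)\mathfrak{c}\subseteq\mathfrak{n}$, and $\gcd(\mathfrak{c},\mathfrak{n})=1$ forces $\mu\equiv\pm1\Mod{\mathfrak{n}}$, while the converse inclusion is immediate from $\mathfrak{n}\subseteq\mathfrak{n}\mathfrak{c}^{-1}$. Thus $f_\mathfrak{n}(CC')=f_\mathfrak{n}(C)$ if and only if $\mu\equiv\pm1\Mod{\mathfrak{n}}$, i.e. if and only if $C'$ is trivial, which is exactly the triviality of the stabilizer and finishes the proof. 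I expect the main obstacle to be the bookkeeping in the second paragraph: verifying that the defining data $(a,b)$ and $\omega$ transform correctly under the passage from $\mathfrak{c}$ to $\mu\mathfrak{c}$, so that both invariants genuinely become $\wp$-values on the one lattice $\Lambda$ (this is where the weight-$0$ homogeneity of the Fricke function and the well-definedness of $h_\mathfrak{n}(C)$ are used in tandem). The hypothesis $K\neq\mathbb{Q}(\sqrt{-1}),\mathbb{Q}(\sqrt{-3})$ enters precisely to make $\mathcal{O}_K^\times=\{\pm1\}$ match the $\pm$ ambiguity coming from the evenness of $\wp$ in Lemma \ref{pval}.
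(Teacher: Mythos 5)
Your proposal is correct in substance and follows essentially the same route as the paper's proof: identify $\mathrm{Gal}(K_\mathfrak{n}/H_K)$ with $P_K(\mathfrak{n})/P_{K,1}(\mathfrak{n})$ (your $\mathcal{H}$, with the same triviality criterion $\mu\equiv\pm1\Mod{\mathfrak{n}}$ coming from $\mathcal{O}_K^\times=\{\pm1\}$), and show the stabilizer of the invariant is trivial via the injectivity of $\wp$ modulo $\pm1$ (Lemma \ref{pval}). The differences are bookkeeping: the paper first reduces to the identity class $C_0$ (legitimate since $K_\mathfrak{n}/K$ is abelian, so Galois conjugates of a generator over $H_K$ are again generators) and takes $\mathfrak{c}=\mathcal{O}_K$, so that your lattice $\Lambda$ is $\mathfrak{n}$ itself and both invariants appear as values $f_{\left[\begin{smallmatrix}a/N\\b/N\end{smallmatrix}\right]}(\omega)$ and $f_{\left[\begin{smallmatrix}r/N\\s/N\end{smallmatrix}\right]}(\omega)$ of two Fricke functions at one point $\omega$; you instead keep $C$ general, dehomogenize via the weight-$0$ scaling of $(g_2g_3/\Delta)\wp$ to values $\wp(1;\Lambda)$ and $\wp(\mu;\Lambda)$ on the single lattice $\Lambda=\mathfrak{n}\mathfrak{c}^{-1}$, and then descend the congruence from $\Lambda$ to $\mathfrak{n}$ by coprimality of $\mathfrak{c}$ and $\mathfrak{n}$. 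Your descent step and your verification that $1,\mu\notin\Lambda$ are both correct, and this buys you a proof that handles every class $C$ directly rather than through the conjugation reduction.

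There is, however, one silent step you should repair. Your pivotal equivalence ``$f_\mathfrak{n}(CC')=f_\mathfrak{n}(C)$ if and only if $\wp(\mu;\Lambda)=\wp(1;\Lambda)$'' divides by the common prefactor $-2^73^5\,g_2(\Lambda)g_3(\Lambda)/\Delta(\Lambda)$, so it requires $g_2(\Lambda)g_3(\Lambda)\neq0$ — and this, not only the unit-group computation, is where the hypothesis $K\neq\mathbb{Q}(\sqrt{-1}),\mathbb{Q}(\sqrt{-3})$ enters; your closing claim that it is needed ``precisely'' to make $\mathcal{O}_K^\times=\{\pm1\}$ is therefore not quite right. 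Since $\Lambda$ is a fractional ideal of $K$, it has complex multiplication by $\mathcal{O}_K$, and $g_2(\Lambda)=0$ (resp.\ $g_3(\Lambda)=0$) happens exactly when $K=\mathbb{Q}(\sqrt{-3})$ (resp.\ $K=\mathbb{Q}(\sqrt{-1})$); indeed for $K=\mathbb{Q}(\sqrt{-1})$ every Fricke value $f_\mathbf{v}(\omega)$ at a CM point of $K$ vanishes identically and your forward implication fails. The paper makes this verification explicitly (citing Lang, Chapter 3, Theorem 3, to get $g_2(\omega),g_3(\omega)\neq0$), and with that one line added your argument is complete.
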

\begin{proof}
Let $C_0$ be the identity class of $\mathrm{Cl}(\mathfrak{n})$.
Since $K_\mathfrak{n}$ is a finite abelian extension of $K$,
it suffices to show that $f_\mathfrak{n}(C_0)$ generates $K_\mathfrak{n}$ over $H_K$.
Let
\begin{eqnarray*}
I_K(\mathfrak{n})&=&\textrm{the group of fractional ideals of $K$
prime to $\mathfrak{n}$},\\
P_K(\mathfrak{n})&=&\langle\alpha\mathcal{O}_K~|~\alpha\in\mathcal{O}_K~
\textrm{such that $\alpha\mathcal{O}_K$ is prime to $\mathfrak{n}$}\rangle
\quad(\subseteq I_K(\mathfrak{n})),\\
P_{K,1}(\mathfrak{n})&=&\langle\alpha\mathcal{O}_K~|~\alpha\in\mathcal{O}_K~\textrm{such that}~
\alpha\equiv1\Mod{\mathfrak{n}}\rangle
\quad(\subseteq P_K(\mathfrak{n})).
\end{eqnarray*}
Since $\mathrm{Gal}(K_\mathfrak{n}/K)\simeq I_K(\mathfrak{n})/P_{K,1}(\mathfrak{n})$ and
$\mathrm{Gal}(H_K/K)\simeq I_K(\mathfrak{n})/P_K(\mathfrak{n})$ (\cite[Chapters IV and V]{Janusz}), we get
\begin{equation}\label{GalKmHK}
\mathrm{Gal}(K_\mathfrak{n}/H_K)\simeq P_K(\mathfrak{n})/P_{K,1}(\mathfrak{n}).
\end{equation}
\par
Assume that a class $D$ in $P_K(\mathfrak{n})/P_{K,1}(\mathfrak{n})$
leaves $f_\mathfrak{n}(C_0)$ fixed via the Artin reciprocity law.
Here, we may assume that $D=[\alpha\mathcal{O}_K]$ for
some $\alpha\in\mathcal{O}_K$ such that $\alpha\mathcal{O}_K$ is
prime to $\mathfrak{n}$,
since $P_K(\mathfrak{n})/P_{K,1}(\mathfrak{n})$ is a finite group.
Take $\mathfrak{c}=\mathcal{O}_K\in C_0$ and let
\begin{eqnarray}
\mathfrak{n}\mathfrak{c}^{-1}&=&\mathfrak{n}~=~[\omega_1,\omega_2]\quad\textrm{for some}~\omega_1,\omega_2\in\mathbb{C}~\textrm{with}~\omega=\omega_1/\omega_2\in\mathbb{H},\label{m1-1}\\
1&=&(a/N)\omega_1+(b/N)\omega_2\quad\textrm{for some}~a,b\in\mathbb{Z}.\label{cd}
\end{eqnarray}
We then have
\begin{eqnarray}
\mathfrak{n}(\alpha\mathcal{O}_K)^{-1}&=&[\omega_1\alpha^{-1},\omega_2\alpha^{-1}],\label{malpha-1}\\
1&=&(r/N)(\omega_1\alpha^{-1})
+(s/N)(\omega_2\alpha^{-1})\quad\textrm{for some}~r,s\in\mathbb{Z}.\label{rs}
\end{eqnarray}
Now we attain that
\begin{eqnarray*}
f_\mathfrak{n}(C_0)&=&f_{\left[\begin{smallmatrix}a/N\\b/N\end{smallmatrix}\right]}
(\omega)\quad\textrm{by (\ref{m1-1}), (\ref{cd}) and the definition (\ref{definvariant})}\\
&=&f_\mathfrak{n}(C_0)^{\sigma_\mathfrak{n}(D)}\\
&=&f_\mathfrak{n}(D)\quad\textrm{by Proposition \ref{invariant} and the fact that $C_0$
is the identity class of $\mathrm{Cl}(\mathfrak{n})$}\\
&=&f_{\left[\begin{smallmatrix}r/N\\s/N\end{smallmatrix}\right]}
(\omega_1\alpha^{-1}/\omega_2\alpha^{-1})
\quad\textrm{by (\ref{malpha-1}), (\ref{rs}) and the definition (\ref{definvariant})}\\
&=&f_{\left[\begin{smallmatrix}r/N\\s/N\end{smallmatrix}\right]}(\omega).
\end{eqnarray*}
Note that since $K$ is different from $\mathbb{Q}(\sqrt{-1})$ and $\mathbb{Q}(\sqrt{-3})$,
we get $g_2(\omega),g_3(\omega)\neq0$ (\cite[Chapter 3, Theorem3]{Lang}).
Thus we obtain by the definition (\ref{Fricke}) and Lemma \ref{pval} that
\begin{equation*}
(a/N)\omega+b/N\equiv\pm((r/N)\omega+s/N)
\Mod{[\omega,1]}.
\end{equation*}
It then follows that
\begin{equation*}
(a/N)\omega_1+(b/N)\omega_2\equiv
\pm((r/N)\omega_1+(s/N)\omega_2)\Mod{[\omega_1,\omega_2]},
\end{equation*}
and hence
\begin{equation*}
1\equiv\pm\alpha\Mod{\mathfrak{n}}
\end{equation*}
by (\ref{cd}), (\ref{rs}) and (\ref{m1-1}). This shows that
the class $D=[\alpha\mathcal{O}_K]$ gives rise to the identity
of $\mathrm{Gal}(K_\mathfrak{n}/H_K)$ via the Artin reciprocity map
by (\ref{GalKmHK}). Therefore, we conclude by Galois theory that $f_\mathfrak{n}(C_0)$ generates  $K_\mathfrak{n}$ over $H_K$.
\end{proof}

\section {Siegel-Ramachandra invariants}

Let $K$ be an imaginary quadratic field other than $\mathbb{Q}(\sqrt{-1})$
and $\mathbb{Q}(\sqrt{-3})$.
Let $d_K$ be its discriminant and set
\begin{equation*}
\tau_K=\left\{\begin{array}{ll}
(-1+\sqrt{d}_K)/2 & \textrm{if}~d_K\equiv1\Mod{4},\\
\sqrt{d_K}/2 & \textrm{if}~d_K\equiv0\Mod{4}
\end{array}\right.
\end{equation*}
so that $\tau_K\in\mathbb{H}$ and $\mathcal{O}_K=[\tau_K,1]$.
Then, as is well known, the special value $j(\tau_K)$ generates $H_K$ over $K$ (\cite[Chapter 10, Theorem 1]{Lang}).
Let $\mathfrak{n}$ be a proper nontrivial ideal of $\mathcal{O}_K$, $N$ ($\geq2$) be the smallest positive integer in $\mathfrak{n}$ and $C\in\mathrm{Cl}(\mathfrak{n})$.
We call the Fricke invariant $g_\mathfrak{n}^{12N}(C)$ the
\textit{Siegel-Ramachandra invariant} modulo $\mathfrak{n}$ at $C$ (\cite{Ramachandra}).
We further let
\begin{equation*}
d_N(\tau)=\mathrm{disc}(g_{\left[\begin{smallmatrix}0\\1/N\end{smallmatrix}\right]}(\tau)^{12N},\mathcal{F}_1).
\end{equation*}

\begin{theorem}\label{Siegelinvariant}
If the special value $d_N(\tau_K)$ is nonzero, then $g_\mathfrak{n}^{12N}(C)$ generates $K_\mathfrak{n}$ over $H_K$.
\end{theorem}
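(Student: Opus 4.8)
The plan is to deduce the statement from the already-established Fricke case (Theorem \ref{Frickesingular}) by transporting the algebraic relation of Lemma \ref{fg}(iii) to the CM point attached to $C$. First I would upgrade Lemma \ref{fg}(iii), which is stated only for the vector $\left[\begin{smallmatrix}0\\1/N\end{smallmatrix}\right]$, to an arbitrary $\mathbf{v}\in\mathcal{V}_N$. Rewriting (iii) as the identity
\[
f_{\left[\begin{smallmatrix}0\\1/N\end{smallmatrix}\right]}(\tau)\,d_N(\tau)=p\bigl(j(\tau),\,g_{\left[\begin{smallmatrix}0\\1/N\end{smallmatrix}\right]}(\tau)^{12N}\bigr)\quad\textrm{in }\mathcal{F}_N,
\]
I would apply an arbitrary $\gamma\in\mathrm{GL}_2(\mathbb{Z}/N\mathbb{Z})/\{\pm I_2\}\simeq\mathrm{Gal}(\mathcal{F}_N/\mathcal{F}_1)$. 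Since $j(\tau)$ and $d_N(\tau)$ lie in $\mathcal{F}_1$ they are fixed, whereas (F3) shifts the subscripts; as $\gamma$ runs over the group, ${}^t\gamma\left[\begin{smallmatrix}0\\1/N\end{smallmatrix}\right]$ runs over every primitive vector up to sign and $\mathbb{Z}^2$ (transitivity of $\mathrm{GL}_2(\mathbb{Z}/N\mathbb{Z})$ on primitive vectors, together with (F2)). Hence for all $\mathbf{v}\in\mathcal{V}_N$ one has $f_\mathbf{v}(\tau)=p(j(\tau),g_\mathbf{v}(\tau)^{12N})/d_N(\tau)$.

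Next I would specialize at the relevant CM point. Choosing $\mathfrak{c},\omega_1,\omega_2,a,b$ for the class $C$ as in the definition (\ref{definvariant}) and setting $\omega=\omega_1/\omega_2$, $\mathbf{v}=\left[\begin{smallmatrix}a/N\\b/N\end{smallmatrix}\right]$, evaluation at $\tau=\omega$ yields
\[
f_\mathfrak{n}(C)=\frac{p\bigl(j(\omega),\,g_\mathfrak{n}^{12N}(C)\bigr)}{d_N(\omega)},
\]
provided $d_N(\omega)\neq0$; all functions are weakly holomorphic, hence finite at $\omega\in\mathbb{H}$, and $g_2(\omega),g_3(\omega)\neq0$ guarantees that $f_\mathbf{v}(\omega)$ is genuinely $f_\mathfrak{n}(C)$. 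Since $[\omega,1]$ is homothetic to the proper $\mathcal{O}_K$-ideal $\mathfrak{n}\mathfrak{c}^{-1}$, the value $j(\omega)$ is a singular modulus lying in $H_K$. Moreover $d_N$ is weakly holomorphic of level $1$ (a product of squared differences of the weakly holomorphic functions $g_\mathbf{v}^{12N}$, cf. Remark \ref{zeros}), so by Lemma \ref{fg}(i) we may write $d_N=D(j)$ with $D\in\mathbb{Q}[X]$, giving $d_N(\omega)=D(j(\omega))\in\mathbb{Q}(j(\omega))\subseteq H_K$.

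The heart of the argument, and the only place the hypothesis enters, is the nonvanishing $d_N(\omega)\neq0$. Here I would use that $j(\omega)$ and $j(\tau_K)$ are both singular moduli of $\mathcal{O}_K$, hence conjugate over $\mathbb{Q}$ (roots of the same Hilbert class polynomial): choosing $\sigma\in\mathrm{Gal}(\overline{\mathbb{Q}}/\mathbb{Q})$ with $\sigma(j(\tau_K))=j(\omega)$ and using that $D$ has rational coefficients,
\[
d_N(\omega)=D(j(\omega))=\sigma\bigl(D(j(\tau_K))\bigr)=\sigma\bigl(d_N(\tau_K)\bigr)\neq0,
\]
the last inequality being exactly the assumption $d_N(\tau_K)\neq0$. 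Thus the single hypothesis at $\tau_K$ propagates to nonvanishing at every conjugate CM point $\omega$.

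Finally I would conclude as follows. The displayed formula shows $f_\mathfrak{n}(C)\in H_K\bigl(g_\mathfrak{n}^{12N}(C)\bigr)$, since $j(\omega),d_N(\omega)\in H_K$ and $d_N(\omega)\neq0$. Because $f_\mathfrak{n}(C)$ generates $K_\mathfrak{n}$ over $H_K$ by Theorem \ref{Frickesingular}, while $g_\mathfrak{n}^{12N}(C)\in K_\mathfrak{n}$ by Proposition \ref{invariant}, we get
\[
K_\mathfrak{n}=H_K\bigl(f_\mathfrak{n}(C)\bigr)\subseteq H_K\bigl(g_\mathfrak{n}^{12N}(C)\bigr)\subseteq K_\mathfrak{n},
\]
forcing equality. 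I expect the main obstacle to be precisely the third step: recognizing that the rationality of the polynomial $D$ with $d_N=D(j)$, combined with the Galois conjugacy of the singular moduli $j(\omega)$ and $j(\tau_K)$ over $\mathbb{Q}$, is what converts the pointwise hypothesis $d_N(\tau_K)\neq0$ into the nonvanishing needed at the genuinely relevant point $\omega$; the propagation of Lemma \ref{fg}(iii) to all $\mathbf{v}$ in the first step also requires some care with the $\pm$ and $\mathbb{Z}^2$ identifications, but is otherwise routine.
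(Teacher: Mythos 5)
Your proposal is correct and takes essentially the same approach as the paper: the paper likewise applies a Galois element $\gamma\equiv\pm\left[\begin{smallmatrix}*&*\\a&b\end{smallmatrix}\right]\Mod{N}$ to the relation of Lemma \ref{fg}(iii) (your transitivity step, instantiated at the one vector needed), writes $d_N(\tau)=d(j(\tau))$ with $d\in\mathbb{Q}[X]$ via Lemma \ref{fg}(i), propagates $d_N(\tau_K)\neq0$ to $d(j(\omega))\neq0$ through the Galois conjugacy of the singular moduli, and concludes with the same sandwich $K_\mathfrak{n}=H_K(f_\mathfrak{n}(C))\subseteq H_K(g_\mathfrak{n}^{12N}(C))\subseteq K_\mathfrak{n}$. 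The only cosmetic difference is that the paper first reduces to the identity class $C_0$, whereas you handle an arbitrary class $C$ directly.
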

\begin{proof}
As in the proof of Theorem \ref{Frickesingular} we let $C=C_0$ (the identity class of $\mathrm{Cl}(\mathfrak{n})$) and
\begin{equation*}
f_\mathfrak{n}(C_0)=f_{\left[\begin{smallmatrix}a/N\\b/N\end{smallmatrix}\right]}(\omega)
\quad\textrm{and}\quad
g_\mathfrak{n}^{12N}(C_0)=g_{\left[\begin{smallmatrix}a/N\\b/N\end{smallmatrix}\right]}(\omega)^{12N}\quad
\textrm{for some}~
\left[\begin{matrix}a/N\\b/N\end{matrix}\right]\in\mathcal{V}_N~\textrm{and}~\omega\in\mathbb{H}.
\end{equation*}
\par
Since $d_N(\tau)$
is weakly holomorphic by Remark \ref{zeros},
Proposition \ref{Siegelfamily} and (F1),
we get by Lemma \ref{fg}(i) that
\begin{equation}\label{d_Nd}
d_N(\tau)=d(j(\tau))\quad\textrm{for
some polynomial}~d(X)\in\mathbb{Q}[X].
\end{equation}
Then we have $d_N(\tau_K)=d(j(\tau_K))\neq0$ by assumption, and hence
$d(j(\omega))\neq0$ because $j(\omega)$ is a Galois conjugate of
$j(\tau_K)$ over $K$ (\cite[Chapter 10, Theorem 1]{Lang}).
\par
Now, take an element $\gamma\in\mathrm{GL}_2(\mathbb{Z}/N\mathbb{Z})/\{\pm I_2\}\simeq
\mathrm{Gal}(\mathcal{F}_N/\mathcal{F}_1)$ such that
$\gamma\equiv\pm\left[\begin{matrix}* & *\\a&b\end{matrix}\right]\Mod{N}$.
We then derive that
\begin{eqnarray*}
f_\mathfrak{n}(C_0)&=&
f_{\left[\begin{smallmatrix}a/N\\b/N\end{smallmatrix}\right]}(\omega)\\
&=&f_{{^t}\gamma\left[\begin{smallmatrix}0\\1/N\end{smallmatrix}\right]}(\omega)
\quad\textrm{by Proposition \ref{Frickefamily} and (F2)}\\
&=&(f_{\left[\begin{smallmatrix}0\\1/N\end{smallmatrix}\right]}(\tau))^\gamma(\omega)
\quad\textrm{by (F3)}\\
&=&(p(j(\tau),g_{\left[\begin{smallmatrix}0\\1/N\end{smallmatrix}\right]}(\tau)^{12N})
/d(j(\tau)))^\gamma(\omega)\\
&&\textrm{for some polynomial}~p(X,Y)\in\mathbb{Q}[X,Y]~\textrm{by Lemma \ref{fg}(iii)}\\
&=&(p(j(\tau),(g_{\left[\begin{smallmatrix}0\\1/N\end{smallmatrix}\right]}(\tau)^{12N})^\gamma)
/d(j(\tau)))(\omega)\quad\textrm{because $\gamma$ fixes $j(\tau)\in\mathcal{F}_1$}\\
&=&(p(j(\tau),(g_{{^t}\gamma\left[\begin{smallmatrix}0\\1/N\end{smallmatrix}\right]}(\tau)^{12N}))
/d(j(\tau)))(\omega)\quad\textrm{by Proposition \ref{Siegelfamily}, (F2) and (F3)}\\
&=&(p(j(\tau),g_{\left[\begin{smallmatrix}a/N\\b/N\end{smallmatrix}\right]}(\tau)^{12N})
/d(j(\tau)))(\omega)\\
&=&(p(j(\omega),g_\mathfrak{n}^{12N}(C_0))
/d(j(\omega)).
\end{eqnarray*}
Thus we achieve that
\begin{eqnarray*}
K_\mathfrak{n}&=&H_K(f_\mathfrak{n}(C_0))\quad\textrm{by Theorem \ref{Frickesingular}}\\
&=&H_K((p(j(\omega),g_\mathfrak{n}^{12N}(C_0))
/d(j(\omega)))\\
&\subseteq&H_K(j(\omega),g_\mathfrak{n}^{12N}(C_0))\quad\textrm{since $d(j(\omega))\neq0$}\\
&=&H_K(g_\mathfrak{n}^{12N}(C_0))\quad\textrm{because $H_K=K(j(\omega))$}\\
&\subseteq&K_\mathfrak{n}\quad\textrm{by Proposition \ref{invariant}}.
\end{eqnarray*}
This proves $K_\mathfrak{n}=H_K(g_\mathfrak{n}^{12N}(C_0))$, as desired.
\end{proof}

\begin{remark}
We conjecture that $d_N(\tau_K)\neq0$
for all integers $N\geq2$ and all imaginary quadratic fields $K$
other than $\mathbb{Q}(\sqrt{-1})$ and $\mathbb{Q}(\sqrt{-3})$.
\end{remark}

\begin{corollary}\label{maincor}
Let $h_K$ be the class number of $K$ and
$\ell_N=[\mathcal{F}_1(g_{\left[\begin{smallmatrix}0\\1/N\end{smallmatrix}
\right]}(\tau)^{12N}):\mathcal{F}_1]$.
If $h_K>N\ell_N(\ell_N-1)/2$,
then $g_\mathfrak{n}^{12N}(C)$ generates $K_\mathfrak{n}$ over $H_K$.
\end{corollary}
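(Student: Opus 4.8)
The plan is to deduce this from Theorem \ref{Siegelinvariant} by showing that the hypothesis $h_K>N\ell_N(\ell_N-1)/2$ forces $d_N(\tau_K)\neq0$. By (\ref{d_Nd}) I write $d_N(\tau)=d(j(\tau))$ for a polynomial $d(X)\in\mathbb{Q}[X]$, and $d$ is not identically zero: by Remark \ref{zeros} the conjugates of $g_{\left[\begin{smallmatrix}0\\1/N\end{smallmatrix}\right]}(\tau)^{12N}$ over $\mathcal{F}_1$ are pairwise distinct functions, so the discriminant $d_N(\tau)$ is a nonzero element of $\mathcal{F}_1$. Since $j(\tau_K)$ generates $H_K$ over $K$, it is algebraic of degree $[H_K:K]=h_K$ over $K$; as $d(X)\in\mathbb{Q}[X]\subseteq K[X]$, a vanishing $d(j(\tau_K))=0$ would force $\min(j(\tau_K),K)\mid d(X)$ and hence $\deg d\geq h_K$. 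Thus it suffices to establish the degree bound $\deg d\leq N\ell_N(\ell_N-1)/2$.

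To bound $\deg d$ I would pass to $q$-orders. Because $j=q^{-1}+O(1)$ has a simple pole at infinity, a polynomial in $j$ has $j$-degree equal to minus its $q$-order, so $\deg d=-\mathrm{ord}_q(d_N(\tau))$; note $d_N(\tau)$ has integral $q$-expansion since it lies in $\mathbb{Q}[j]$. From the determinant computation in the proof of Lemma \ref{fg}(iii), combined with the identification of the conjugates in Remark \ref{zeros}, one has the explicit factorization
\[
d_N(\tau)=\mathrm{disc}\bigl(g_{\left[\begin{smallmatrix}0\\1/N\end{smallmatrix}\right]}(\tau)^{12N},\mathcal{F}_1\bigr)=\prod_{1\leq k_1<k_2\leq\ell_N}\bigl(g_{\mathbf{v}_{k_1}}(\tau)^{12N}-g_{\mathbf{v}_{k_2}}(\tau)^{12N}\bigr)^2,
\]
where $\mathbf{v}_1,\ldots,\mathbf{v}_{\ell_N}$ run over a set of representatives of $\mathcal{V}_N/\sim$.

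Now I would estimate the $q$-order of each factor. By the ultrametric inequality for $q$-orders and the order formula (\ref{OrderSiegel}),
\[
\mathrm{ord}_q\bigl(g_\mathbf{u}(\tau)^{12N}-g_\mathbf{v}(\tau)^{12N}\bigr)\geq\min\bigl\{6N\mathbf{B}_2(\langle u_1\rangle),\,6N\mathbf{B}_2(\langle v_1\rangle)\bigr\}\geq-\tfrac{N}{2},
\]
because $\mathbf{B}_2(X)=(X-1/2)^2-1/12\geq-1/12$ for $X\in[0,1)$, so every $6N\mathbf{B}_2(\langle\,\cdot\,\rangle)\geq-N/2$. Summing twice this quantity over the $\binom{\ell_N}{2}=\ell_N(\ell_N-1)/2$ pairs gives $\mathrm{ord}_q(d_N(\tau))\geq-N\ell_N(\ell_N-1)/2$, whence $\deg d=-\mathrm{ord}_q(d_N(\tau))\leq N\ell_N(\ell_N-1)/2$. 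Therefore $h_K>N\ell_N(\ell_N-1)/2\geq\deg d$ rules out $d(j(\tau_K))=0$, so $d_N(\tau_K)=d(j(\tau_K))\neq0$, and Theorem \ref{Siegelinvariant} yields that $g_\mathfrak{n}^{12N}(C)$ generates $K_\mathfrak{n}$ over $H_K$.

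I expect the main obstacle to be the degree-bounding step: one must recognize the discriminant as the product of squared differences of the $12N$-th powers of Siegel functions and then control its pole order at infinity uniformly, the crucial arithmetic input being that $\mathbf{B}_2$ attains its minimum $-1/12$ at $X=1/2$ on $[0,1)$, which is exactly what produces the factor $N/2$ per pair and hence the clean bound $N\ell_N(\ell_N-1)/2$.
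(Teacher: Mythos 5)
Your proposal is correct and follows essentially the same route as the paper: both reduce to Theorem \ref{Siegelinvariant} by bounding $\deg d$ via $\mathrm{ord}_q(d_N(\tau))\geq -N\ell_N(\ell_N-1)/2$, using Remark \ref{zeros}, the ultrametric inequality, the $q$-order formula (\ref{OrderSiegel}), and the minimum $\mathbf{B}_2(1/2)=-1/12$, then comparing with $h_K=\deg(\min(j(\tau_K),K))$. Your only additions are explicit spellings of details the paper leaves implicit (nonvanishing of $d$, the divisibility $\min(j(\tau_K),K)\mid d(X)$, and the exact equality $\deg d=-\mathrm{ord}_q(d_N)$), all of which are sound.
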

\begin{proof}
Letting $g_1,g_2,\ldots,g_{\ell_N}$ be all the zeros of the polynomial $\min(g_{\left[\begin{smallmatrix}0\\1/N\end{smallmatrix}\right]}(\tau)^{12N},\mathcal{F}_1)\in
\mathcal{F}_1[X]$ we see that
\begin{eqnarray*}
\mathrm{ord}_q(d_N(\tau))&=&
\mathrm{ord}_q(\prod_{1\leq k_1<k_2\leq\ell_N}(g_{k_1}-g_{k_2})^2)\\
&=&2\sum_{1\leq k_1<k_2\leq\ell_N}\mathrm{ord}_q(g_{k_1}-g_{k_2})\\
&\geq&2\sum_{1\leq k_1<k_2\leq\ell_N}\min\{\mathrm{ord}_q(g_{k_1}),
\mathrm{ord}_q(g_{k_2})\}\\
&\geq&2\sum_{1\leq k_1<k_2\leq\ell_N}6N\mathbf{B}_2(1/2)\\
&&\textrm{by Remark \ref{zeros}, (\ref{OrderSiegel}) and the shape of the graph $Y=\mathbf{B}_2(X)$}\\
&=&-N\ell_N(\ell_N-1)/2.
\end{eqnarray*}
Let $d(X)$ be the polynomial in $\mathbb{Q}[X]$ given in (\ref{d_Nd}).
Since $\mathrm{ord}_q(j(\tau))=-1$, we attain
\begin{equation*}
\deg(d(X))\leq N\ell_N(\ell_N-1)/2.
\end{equation*}
Now, the assumption $h_K=\mathrm{deg}(\min(j(\tau_K),K))>N\ell_N(\ell_N-1)/2$ implies that
\begin{equation*}
d_N(\tau_K)=d(j(\tau_K))
\neq0.
\end{equation*}
Thus the result follows from Theorem \ref{Siegelinvariant}.
\end{proof}

\section {Shimura's reciprocity law}

Let $K$ be an imaginary quadratic field other than $\mathbb{Q}(\sqrt{-1})$ and
$\mathbb{Q}(\sqrt{-3})$. For a positive integer $N$, let $\mathfrak{n}=N\mathcal{O}_K$ and
\begin{equation*}
\mathcal{F}_{N,K}=\{h(\tau)\in\mathcal{F}_N~|~\textrm{$h(\tau)$ is finite at $\tau_K$}\}.
\end{equation*}
As a consequence of the theory of complex multiplication we obtain the following proposition.

\begin{proposition}\label{CM}
We have
$K_\mathfrak{n}=K(h(\tau_K)~|~h(\tau)\in\mathcal{F}_{N,K})$.
\end{proposition}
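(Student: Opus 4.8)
The plan is to prove the two inclusions $K_\mathfrak{n}\subseteq K(h(\tau_K)\mid h\in\mathcal{F}_{N,K})$ and $K(h(\tau_K)\mid h\in\mathcal{F}_{N,K})\subseteq K_\mathfrak{n}$ separately, the second being the substantive one.

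For the first inclusion I would read off a single generator from Theorem \ref{Frickesingular}. Taking $\mathfrak{c}=\mathcal{O}_K$ in the identity class $C_0$ of $\mathrm{Cl}(\mathfrak{n})$, we have $\mathfrak{n}\mathfrak{c}^{-1}=N\mathcal{O}_K=[N\tau_K,N]$, so that $\omega=\tau_K$ and $1=(0/N)(N\tau_K)+(1/N)N$; the definition (\ref{definvariant}) then gives $f_\mathfrak{n}(C_0)=f_{\left[\begin{smallmatrix}0\\1/N\end{smallmatrix}\right]}(\tau_K)$. Since $K\neq\mathbb{Q}(\sqrt{-1}),\mathbb{Q}(\sqrt{-3})$, Theorem \ref{Frickesingular} applies and yields $K_\mathfrak{n}=H_K(f_{\left[\begin{smallmatrix}0\\1/N\end{smallmatrix}\right]}(\tau_K))$. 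As $H_K=K(j(\tau_K))$ and both $j(\tau)$ and $f_{\left[\begin{smallmatrix}0\\1/N\end{smallmatrix}\right]}(\tau)$ lie in $\mathcal{F}_{N,K}$ (they are weakly holomorphic, hence finite at $\tau_K\in\mathbb{H}$), the values $j(\tau_K)$ and $f_{\left[\begin{smallmatrix}0\\1/N\end{smallmatrix}\right]}(\tau_K)$ already generate $K_\mathfrak{n}$ over $K$, which gives the first inclusion at once.

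For the reverse inclusion I would first isolate a purely modular fact, namely $\mathcal{F}_N=\mathcal{F}_1(f_\mathbf{v}(\tau)\mid\mathbf{v}\in\mathcal{V}_N)$. If $\gamma\in\mathrm{Gal}(\mathcal{F}_N/\mathcal{F}_1)$ fixes every $f_\mathbf{v}(\tau)$, then $f_{{}^t\gamma\mathbf{v}}(\tau)=f_\mathbf{v}(\tau)$ for all $\mathbf{v}$ by (F3), whence ${}^t\gamma\mathbf{v}\equiv\pm\mathbf{v}\Mod{\mathbb{Z}^2}$ for every $\mathbf{v}$ by the distinctness recorded in Remark \ref{zeros}; evaluating this on $\left[\begin{smallmatrix}1/N\\0\end{smallmatrix}\right]$, $\left[\begin{smallmatrix}0\\1/N\end{smallmatrix}\right]$ and $\left[\begin{smallmatrix}1/N\\1/N\end{smallmatrix}\right]$ forces ${}^t\gamma\equiv\pm I_2$, so that $\gamma$ is trivial in $\mathrm{GL}_2(\mathbb{Z}/N\mathbb{Z})/\{\pm I_2\}$. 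By Galois theory $\mathcal{F}_1(f_\mathbf{v}(\tau)\mid\mathbf{v})=\mathcal{F}_N$, and hence every $h(\tau)\in\mathcal{F}_{N,K}$ is a rational expression over $\mathbb{Q}$ in $j(\tau)$ and finitely many of the $f_\mathbf{v}(\tau)$. This reduces the reverse inclusion to showing that $j(\tau_K)$ and each $f_\mathbf{v}(\tau_K)$ lie in $K_\mathfrak{n}$.

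This last step is where the theory of complex multiplication enters and is the main obstacle. The value $j(\tau_K)$ generates $H_K\subseteq K_\mathfrak{n}$. For the Fricke values I would apply the reciprocity law of complex multiplication (\cite[Chapter 6]{Shimura}, \cite[Chapter 11]{K-L}): it places every $h(\tau_K)$ with $h\in\mathcal{F}_{N,K}$ in $K^{\mathrm{ab}}$, and shows that any $\sigma\in\mathrm{Gal}(K^{\mathrm{ab}}/K_\mathfrak{n})$ corresponds to an element that is trivial in $\mathrm{GL}_2(\mathbb{Z}/N\mathbb{Z})/\{\pm I_2\}$ and therefore fixes $\mathcal{F}_N$ pointwise, so that $f_\mathbf{v}(\tau_K)^\sigma=f_\mathbf{v}(\tau_K)$; thus $f_\mathbf{v}(\tau_K)\in K_\mathfrak{n}$. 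I note that this same fixation argument applies verbatim to an arbitrary $h\in\mathcal{F}_{N,K}$, giving $h(\tau_K)^\sigma=h(\tau_K)$ and hence $h(\tau_K)\in K_\mathfrak{n}$ directly; this has the added benefit of sidestepping the only technical nuisance in the reduction of the previous paragraph, namely that the rational expression for $h$ in $j(\tau)$ and the $f_\mathbf{v}(\tau)$ could have a denominator vanishing at $\tau_K$, since it never requires evaluating that expression.
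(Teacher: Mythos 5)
Your proposal is correct, but it is organized differently from the paper, which offers no argument at all: Proposition \ref{CM} is proved there by bare citation to \cite[Chapter 10, Corollary to Theorem 2]{Lang}, where \emph{both} inclusions are extracted from the idelic main theorem of complex multiplication. You instead split the statement. The inclusion $K_\mathfrak{n}\subseteq K(h(\tau_K)~|~h\in\mathcal{F}_{N,K})$ you obtain paper-internally from Theorem \ref{Frickesingular}, via the computation $f_\mathfrak{n}(C_0)=f_{\left[\begin{smallmatrix}0\\1/N\end{smallmatrix}\right]}(\tau_K)$ (the same computation that gives (\ref{g_N})) together with $H_K=K(j(\tau_K))$; this is legitimate and not circular, since Theorem \ref{Frickesingular} rests on Proposition \ref{invariant} (imported from Kubert--Lang), not on Proposition \ref{CM}. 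The reverse inclusion you delegate, exactly as the cited source does, to the reciprocity law of complex multiplication: an element of $\mathrm{Gal}(K^{\mathrm{ab}}/K_\mathfrak{n})$ is the Artin symbol of an idele $\alpha u$ with $\alpha\in K^\times$ and $u\equiv1\Mod{\mathfrak{n}}$, whose associated matrix is $\equiv I_2\Mod{N}$ and hence fixes $h(\tau_K)$ for every $h\in\mathcal{F}_{N,K}$; asserting this with citation is on par with the paper's own treatment. What your route buys is that the containment $K_\mathfrak{n}\subseteq K(\textrm{values})$ comes for free from a theorem already proved in the paper, avoiding the class-field-theoretic kernel computation on that side; what it costs is that your proof inherits the hypotheses of Theorem \ref{Frickesingular}, namely $K\neq\mathbb{Q}(\sqrt{-1}),\mathbb{Q}(\sqrt{-3})$ (a standing assumption of the section, so harmless) and $N\geq2$, so the degenerate case $\mathfrak{n}=\mathcal{O}_K$ would need the separate (trivial) observation $K_{\mathcal{O}_K}=H_K=K(j(\tau_K))$, whereas Lang's corollary is uniform. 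Finally, you were right to discard your first reduction through $\mathcal{F}_N=\mathbb{Q}(j(\tau),f_\mathbf{v}(\tau)~|~\mathbf{v}\in\mathcal{V}_N)$: the vanishing-denominator problem you flag there is genuine, and the direct fixation argument of your last paragraph is the correct repair.
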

\begin{proof}
See \cite[Chapter 10, Corollary to Theorem 2]{Lang}.
\end{proof}

\begin{proposition}[Shimura's reciprocity law]\label{Shimura}
Let $\min(\tau_K,\mathbb{Q})=X^2+bX+c\in\mathbb{Z}[X]$.
The matrix group
\begin{equation*}
\mathcal{W}_{N,K}=\bigg\{\left[\begin{matrix}
t-bs & -cs\\s&t
\end{matrix}\right]\in\mathrm{GL}_2(\mathbb{Z}/N\mathbb{Z})~
|~t,s\in\mathbb{Z}/N\mathbb{Z}\bigg\}
\end{equation*}
gives rise to the isomorphism
\begin{eqnarray*}
\mathcal{W}_{N,K}/\{\pm I_2\}&\rightarrow&\mathrm{Gal}(K_\mathfrak{n}/H_K)\\
\alpha&\mapsto&h(\tau_K)\mapsto h(\tau)^\alpha(\tau_K),~h(\tau)\in\mathcal{F}_{N,K}.
\end{eqnarray*}
\end{proposition}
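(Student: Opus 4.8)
The plan is to derive this explicit reciprocity law from the main theorem of complex multiplication, together with a direct computation of the matrix representing multiplication on $\mathcal{O}_K$. First I would identify $\mathcal{W}_{N,K}$ group-theoretically. Writing $\mathcal{O}_K=[\tau_K,1]$ and using $\tau_K^2=-b\tau_K-c$, multiplication by $\alpha=t+s\tau_K\in\mathcal{O}_K$ sends $\tau_K\mapsto(t-bs)\tau_K-cs$ and $1\mapsto s\tau_K+t$; hence in the ordered basis $(\tau_K,1)$ it is represented by $\left[\begin{smallmatrix}t-bs&-cs\\s&t\end{smallmatrix}\right]$, whose determinant is the norm $t^2-bst+cs^2=\mathrm{N}_{K/\mathbb{Q}}(\alpha)$. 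Reducing modulo $N$, the regular representation yields a ring homomorphism $\mathcal{O}_K/N\mathcal{O}_K\to M_2(\mathbb{Z}/N\mathbb{Z})$, which is injective since $(\tau_K,1)$ remains a $\mathbb{Z}/N\mathbb{Z}$-basis; restricting to units gives an injective group homomorphism $(\mathcal{O}_K/N\mathcal{O}_K)^\times\to\mathrm{GL}_2(\mathbb{Z}/N\mathbb{Z})$ whose image is exactly $\mathcal{W}_{N,K}$. This already shows $\mathcal{W}_{N,K}$ is a subgroup isomorphic to $(\mathcal{O}_K/N\mathcal{O}_K)^\times$.

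Next I would bring in Shimura's reciprocity law in its idelic form: for a finite idele $x\in\mathbb{A}_{K,f}^\times$ with Artin symbol $(x,K)\in\mathrm{Gal}(K^{\mathrm{ab}}/K)$ and any $h(\tau)\in\mathcal{F}_{N,K}$, one has $h(\tau_K)^{(x,K)}=h^{g_{\tau_K}(x)}(\tau_K)$, where $g_{\tau_K}(x)\in\mathrm{GL}_2(\mathbb{Z}/N\mathbb{Z})$ is the reduction modulo $N$ of the matrix of multiplication by $x$ on $\mathcal{O}_K\otimes\widehat{\mathbb{Z}}$ in the basis $(\tau_K,1)$, computed just as above. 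This is the deep input: it guarantees simultaneously that the assignment $h(\tau_K)\mapsto h^\alpha(\tau_K)$ is well defined (independent of the modular function $h$ representing a given value) and that the matrix entering the formula is precisely of the shape defining $\mathcal{W}_{N,K}$.

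Finally I would restrict to $x\in\widehat{\mathcal{O}}_K^\times$. By class field theory these ideles generate $\mathrm{Gal}(K^{\mathrm{ab}}/H_K)$, and, because $\mathfrak{n}=N\mathcal{O}_K$, those congruent to $1$ modulo $N$ fix $K_\mathfrak{n}$; thus the Artin map induces a surjection $\widehat{\mathcal{O}}_K^\times\twoheadrightarrow\mathrm{Gal}(K_\mathfrak{n}/H_K)$ whose associated matrices $g_{\tau_K}(x)\bmod N$ sweep out all of $\mathcal{W}_{N,K}$. Combined with Proposition \ref{CM}, which tells us the values $h(\tau_K)$ generate $K_\mathfrak{n}$, this shows that $\alpha\mapsto(h(\tau_K)\mapsto h^\alpha(\tau_K))$ is a well-defined surjective homomorphism $\mathcal{W}_{N,K}\to\mathrm{Gal}(K_\mathfrak{n}/H_K)$. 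Its kernel consists of those $\alpha\in(\mathcal{O}_K/N\mathcal{O}_K)^\times$ whose Artin symbol is trivial on $K_\mathfrak{n}$, that is $\alpha\equiv\varepsilon\pmod{N}$ for a global unit $\varepsilon\in\mathcal{O}_K^\times$; since $K$ differs from $\mathbb{Q}(\sqrt{-1})$ and $\mathbb{Q}(\sqrt{-3})$ we have $\mathcal{O}_K^\times=\{\pm1\}$, and on the matrix side this is exactly $\{\pm I_2\}$. Hence the map descends to the claimed isomorphism $\mathcal{W}_{N,K}/\{\pm I_2\}\xrightarrow{\sim}\mathrm{Gal}(K_\mathfrak{n}/H_K)$. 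The main obstacle is the invocation of Shimura's reciprocity law itself---converting the arithmetic (Artin) action on the singular value into the algebraic $\mathrm{GL}_2$-action on the modular function with the explicit multiplication matrix; everything else is the routine linear-algebra computation of that matrix and standard idele-theoretic bookkeeping.
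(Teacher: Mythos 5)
Your proposal is correct and is essentially the argument of the paper's own source: the paper gives no proof beyond citing \cite[$\S$3]{Stevenhagen}, and Stevenhagen's derivation is exactly your route --- identify $(\mathcal{O}_K/N\mathcal{O}_K)^\times$ with $\mathcal{W}_{N,K}$ via the regular representation in the basis $(\tau_K,1)$, invoke the idelic form of Shimura's reciprocity law to transport the Artin action on $h(\tau_K)$ to the $\mathrm{GL}_2$-action, and compute the kernel as the image of $\mathcal{O}_K^\times=\{\pm1\}$, using $K\neq\mathbb{Q}(\sqrt{-1}),\mathbb{Q}(\sqrt{-3})$. One cosmetic nit: the standard normalization of the reciprocity law is $h(\tau_K)^{(x^{-1},K)}=h^{g_{\tau_K}(x)}(\tau_K)$ (with an inverse, which you dropped); this is harmless for the isomorphism claim since both groups are abelian and inversion is then an automorphism, but it is the convention that makes the dictionary of Remark \ref{correspond} come out with the stated signs.
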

\begin{proof}
See \cite[$\S$3]{Stevenhagen}.
\end{proof}

\begin{remark}\label{correspond}
Let $x=s\tau_K+t\in\mathcal{O}_K$ with $s,t\in\mathbb{Z}$. If $x\mathcal{O}_K$ is relatively prime to $\mathfrak{n}$, then the class $[x\mathcal{O}_K]$ in
$P_K(\mathfrak{n})/P_{K,1}(\mathfrak{n})$ corresponds to the matrix
$\left[\begin{matrix}t-bs&-cs\\s&t\end{matrix}\right]\in\mathcal{W}_{N,K}/\{\pm I_2\}$
(\cite[Chapter 11, $\S$1]{Lang} and \cite{Stevenhagen}).
\end{remark}

\begin{lemma}\label{quotientremark}
Assume that $N\equiv0\Mod{4}$. We have
\begin{equation*}
g_{\left[\begin{smallmatrix}1/2\\1/2+1/N\end{smallmatrix}\right]}(\tau_K)^{12N}/
g_{\left[\begin{smallmatrix}0\\1/N\end{smallmatrix}\right]}(\tau_K)^{12N}=
g_\mathfrak{n}^{12N}(C)/g_\mathfrak{n}^{12N}(C_0),
\end{equation*}
where $C=[((N/2)\tau_K+N/2+1)\mathcal{O}_K]$, $C_0=[\mathcal{O}_K]\in\mathrm{Cl}(\mathfrak{n})$.
This value is a unit in $K_\mathfrak{n}$.
\end{lemma}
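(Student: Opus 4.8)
The plan is to evaluate both Siegel--Ramachandra invariants appearing on the right-hand side directly from the definition (\ref{definvariant}), and to identify them with the two Siegel function values on the left.

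First I would compute the denominator. Taking the representative $\mathfrak{c}=\mathcal{O}_K\in C_0$, one has $\mathfrak{n}\mathfrak{c}^{-1}=N\mathcal{O}_K=[N\tau_K,N]$, so we may put $\omega_1=N\tau_K$, $\omega_2=N$ and $\omega=\omega_1/\omega_2=\tau_K\in\mathbb{H}$. The relation $1=(a/N)\omega_1+(b/N)\omega_2$ becomes $1=a\tau_K+b$, forcing $a=0$ and $b=1$; hence $g_\mathfrak{n}^{12N}(C_0)=g_{\left[\begin{smallmatrix}0\\1/N\end{smallmatrix}\right]}(\tau_K)^{12N}$ by (\ref{definvariant}).

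Next I would treat the numerator. Write $\alpha=(N/2)\tau_K+N/2+1$, so that $\alpha-1=(N/2)(\tau_K+1)$ and therefore $\alpha\equiv1\Mod{(N/2)\mathcal{O}_K}$. Because $N\equiv0\Mod{4}$, the integer $N/2$ is even and hence divisible by every rational prime dividing $N$; consequently every prime ideal of $\mathcal{O}_K$ above $\mathfrak{n}=N\mathcal{O}_K$ already divides $(N/2)\mathcal{O}_K$ and so cannot divide $\alpha$. Thus $\mathfrak{c}=\alpha\mathcal{O}_K$ is an integral ideal prime to $\mathfrak{n}$ lying in $C=[\alpha\mathcal{O}_K]$. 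With this choice $\mathfrak{n}\mathfrak{c}^{-1}=(N/\alpha)\mathcal{O}_K=[N\tau_K/\alpha,N/\alpha]$, so I take $\omega_1=N\tau_K/\alpha$, $\omega_2=N/\alpha$ and again $\omega=\tau_K$. Then $1=(a\tau_K+b)/\alpha$ gives $a\tau_K+b=\alpha$, i.e. $a=N/2$ and $b=N/2+1$; since $\gcd(N/2,N/2+1,N)=1$ the pair $\left[\begin{smallmatrix}a/N\\b/N\end{smallmatrix}\right]=\left[\begin{smallmatrix}1/2\\1/2+1/N\end{smallmatrix}\right]$ indeed belongs to $\mathcal{V}_N$, and (\ref{definvariant}) yields $g_\mathfrak{n}^{12N}(C)=g_{\left[\begin{smallmatrix}1/2\\1/2+1/N\end{smallmatrix}\right]}(\tau_K)^{12N}$. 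Dividing the two evaluations produces the asserted identity.

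Finally, both the membership in $K_\mathfrak{n}$ and the unit property are immediate from Proposition \ref{invariant}, which states that each $g_\mathfrak{n}^{12N}(C)$ lies in $K_\mathfrak{n}$ and that the quotient $g_\mathfrak{n}^{12N}(C)/g_\mathfrak{n}^{12N}(C_0)$ is a unit. The only step that is not purely formal is verifying that $\alpha\mathcal{O}_K$ is prime to $\mathfrak{n}$; this is precisely where the hypothesis $N\equiv0\Mod{4}$ enters, since if $N\equiv2\Mod{4}$ then $N/2$ would be odd and a prime above $2$ could divide $\alpha$.
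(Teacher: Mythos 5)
Your proof is correct, but it takes a genuinely different route from the paper's. The paper never evaluates $g_\mathfrak{n}^{12N}(C)$ from the definition: it writes $g_\mathfrak{n}^{12N}(C)=g_\mathfrak{n}^{12N}(C_0)^{\sigma_\mathfrak{n}(C)}$ via Proposition \ref{invariant}, translates the class $C=[((N/2)\tau_K+N/2+1)\mathcal{O}_K]$ into an explicit matrix $\alpha\in\mathcal{W}_{N,K}/\{\pm I_2\}$ by Remark \ref{correspond} (with a case split according to $d_K\equiv1$ or $0\Mod{4}$), and then applies Stevenhagen's form of Shimura's reciprocity law (Proposition \ref{Shimura}) together with (F2)--(F3) to compute ${}^t\alpha\left[\begin{smallmatrix}0\\1/N\end{smallmatrix}\right]\equiv\left[\begin{smallmatrix}1/2\\1/2+1/N\end{smallmatrix}\right]\Mod{\mathbb{Z}^2}$. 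You instead evaluate both invariants directly from (\ref{definvariant}), choosing $\mathfrak{c}=\alpha\mathcal{O}_K$ with $\alpha=(N/2)\tau_K+N/2+1$, noting $\mathfrak{n}\mathfrak{c}^{-1}=[N\tau_K/\alpha,\,N/\alpha]$ and solving $a\tau_K+b=\alpha$ in the basis $\{\tau_K,1\}$ of $\mathcal{O}_K$; this rests only on the well-definedness of the Fricke invariant (its independence of the choices of $\mathfrak{c}$, $\omega_1$, $\omega_2$, quoted in $\S$\ref{invariantdef} from Kubert--Lang), not on the reciprocity law. Your verification that $\alpha\mathcal{O}_K$ is prime to $\mathfrak{n}$ is sound and is exactly where $N\equiv0\Mod{4}$ enters: every odd prime dividing $N$ automatically divides $N/2$, while $2\mid N/2$ precisely when $4\mid N$, so $\alpha\equiv1\Mod{(N/2)\mathcal{O}_K}$ forces every prime above $\mathfrak{n}$ to miss $\alpha$; this primality is a point the paper leaves implicit, since it is also a hypothesis of Remark \ref{correspond}. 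As for what each approach buys: yours is more elementary, is uniform in $d_K\bmod 4$, and yields the individual identity $g_\mathfrak{n}^{12N}(C)=g_{\left[\begin{smallmatrix}1/2\\1/2+1/N\end{smallmatrix}\right]}(\tau_K)^{12N}$ rather than only the quotient; the paper's computation, on the other hand, deliberately rehearses the $\mathcal{W}_{N,K}$-machinery of Proposition \ref{Shimura}, which is reused for the Galois-conjugate estimates in the proof of Theorem \ref{smallinvariant}, so within the paper it doubles as a template for the later argument.
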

\begin{proof}
If we take $\mathfrak{c}=\mathcal{O}_K\in C_0$, then we have
\begin{equation*}
\mathfrak{n}\mathfrak{c}^{-1}=\mathfrak{n}=[N\tau_K,N]
\quad\textrm{and}\quad
1=0(N\tau_K)+(1/N)N.
\end{equation*}
So we get by the definition (\ref{definvariant})
\begin{equation}\label{g_N}
g_\mathfrak{n}^{12N}(C_0)=g_{\left[\begin{smallmatrix}0\\1/N\end{smallmatrix}\right]}(\tau_K)^{12N}. \end{equation}
By Remark \ref{correspond}, the class $C=[((N/2)\tau_K+N/2+1)\mathcal{O}_K]\in P_{K}(\mathfrak{n})/P_{K,1}(\mathfrak{n})$ corresponds to
\begin{equation*}
\alpha=\left\{
\begin{array}{ll}
\left[\begin{matrix}
1 & ((1-d_K)/4)(N/2)\\N/2&N/2+1
\end{matrix}\right] & \textrm{if}~d_K\equiv1\Mod{4},\vspace{0.1cm}\\
\left[\begin{matrix}
N/2+1 & (d_K/4)(N/2)\\N/2&N/2+1
\end{matrix}\right] & \textrm{if}~d_K\equiv0\Mod{4}
\end{array}\right.
\end{equation*}
in $\mathcal{W}_{N,K}/\{\pm I_2\}$.
We then deduce that
\begin{eqnarray*}
g_\mathfrak{n}^{12N}(C)/g_\mathfrak{n}^{12N}(C_0)&=&
g_\mathfrak{n}^{12N}(C_0)^{\sigma_\mathfrak{n}(C)}/g_\mathfrak{n}^{12N}(C_0)
\quad\textrm{by Proposition \ref{invariant}}\\
&=&(g_{\left[\begin{smallmatrix}0\\1/N\end{smallmatrix}\right]}(\tau_K)^{12N})^{\sigma_\mathfrak{n}(C)}
/g_{\left[\begin{smallmatrix}0\\1/N\end{smallmatrix}\right]}(\tau_K)^{12N}
\quad\textrm{by (\ref{g_N})}\\
&=&(g_{\left[\begin{smallmatrix}0\\1/N\end{smallmatrix}\right]}(\tau)^{12N})^\alpha(\tau_K)/
g_{\left[\begin{smallmatrix}0\\1/N\end{smallmatrix}\right]}(\tau_K)^{12N}
\quad\textrm{by Proposition \ref{Shimura}}\\
&=&g_{{^t}\alpha\left[\begin{smallmatrix}0\\1/N\end{smallmatrix}\right]}(\tau)^{12N}(\tau_K)/
g_{\left[\begin{smallmatrix}0\\1/N\end{smallmatrix}\right]}(\tau_K)^{12N}
\quad\textrm{by Proposition \ref{Siegelfamily}, (F2) and (F3)}\\
&=&g_{\left[\begin{smallmatrix}1/2\\1/2+1/N\end{smallmatrix}\right]}(\tau_K)^{12N}/
g_{\left[\begin{smallmatrix}0\\1/N\end{smallmatrix}\right]}(\tau_K)^{12N}.
\end{eqnarray*}
This is a unit in $K_\mathfrak{n}$ by Proposition \ref{invariant}.
\end{proof}

\section {Invariants with small exponents}

Let $K$ be an imaginary quadratic field of discriminant $d_K$. For a positive integer $N$, let  $\mathfrak{n}=N\mathcal{O}_K$
Throughout this section we assume that
\begin{itemize}
\item[(i)] $N\geq4$ and $N\equiv0\Mod{2}$,
\item[(ii)] $|d_K|\geq4N^{4/3}$ ($>25$) and $d_K\equiv0\Mod{4}$,
\end{itemize}

\begin{lemma}\label{inequalitylemma}
Let $\mathbf{v}=\left[\begin{matrix}a/N\\b/N\end{matrix}
\right]\in\mathcal{V}_N$.
\begin{itemize}
\item[\textup{(i)}]
If $\mathbf{v}\not\equiv\pm\left[
\begin{matrix}0\\1/N\end{matrix}\right]\Mod{\mathbb{Z}^2}$, then we have
$|g_{\mathbf{v}}(\tau_K)|>|g_{\left[\begin{smallmatrix}0\\1/N\end{smallmatrix}\right]}(\tau_K)|$.
\item[\textup{(ii)}] We also get
$|g_\mathbf{v}(\tau_K)|\leq
|g_{\left[\begin{smallmatrix}1/2\\1/2+1/N\end{smallmatrix}\right]}(\tau_K)|$.
\end{itemize}
\end{lemma}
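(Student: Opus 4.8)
The plan is to evaluate everything through the $q$-product (\ref{FourierSiegel}) at the purely imaginary point $\tau_K$. Since $d_K\equiv0\Mod4$, we have $\tau_K=\sqrt{d_K}/2=iy$ with $y=\sqrt{|d_K|}/2\geq N^{2/3}$ by hypothesis (ii), so that $q=e^{2\pi i\tau_K}$ satisfies $q^s=e^{-2\pi sy}\in(0,1)$ for every real $s>0$. Because $|g_\mathbf{v}(\tau_K)|$ depends only on $\pm\mathbf{v}\Mod{\mathbb{Z}^2}$, I may assume $0\leq v_1\leq1/2$. Taking logarithms in (\ref{FourierSiegel}) I would write
\begin{equation*}
\log|g_\mathbf{v}(\tau_K)|=-\pi y\mathbf{B}_2(v_1)+\log|1-q^{v_1}e^{2\pi iv_2}|+E(\mathbf{v}),
\end{equation*}
where the main term $M(v_1)=-\pi y\mathbf{B}_2(v_1)$ records the $q$-order (\ref{OrderSiegel}), the first factor is $L(\mathbf{v})=\log|1-q^{v_1}e^{2\pi iv_2}|$, and the tail $E(\mathbf{v})=\sum_{n\geq1}(\log|1-q^{n+v_1}e^{2\pi iv_2}|+\log|1-q^{n-v_1}e^{-2\pi iv_2}|)$ is exponentially small: since $v_1\leq1/2$ every factor there lies within $e^{-\pi y}$ of $1$, so $|E(\mathbf{v})|\leq Ce^{-\pi y}$ for an absolute constant $C$, negligible against the gaps produced below.

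For (i) the reference vector $\left[\begin{smallmatrix}0\\1/N\end{smallmatrix}\right]$ gives $M=-\pi y/6$ and $L=\log(2\sin(\pi/N))$, and I would split into two cases. If $v_1=0$, then $\mathbf{v}\equiv\left[\begin{smallmatrix}0\\b/N\end{smallmatrix}\right]$ with $\gcd(b,N)=1$, and after reducing $0<v_2\leq1/2$ the hypothesis $\mathbf{v}\not\equiv\pm\left[\begin{smallmatrix}0\\1/N\end{smallmatrix}\right]$ forces $b\geq2$; since $N\geq4$ keeps $\sin$ increasing on $[0,\pi/2]$, the first factors obey $2\sin(\pi b/N)\geq2\sin(2\pi/N)=2\cos(\pi/N)\cdot2\sin(\pi/N)\geq\sqrt{2}\cdot2\sin(\pi/N)$, a gap of at least $\tfrac12\log2$ that swamps the tails. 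If $0<v_1\leq1/2$, the main term gains $M(v_1)-M(0)=\pi yv_1(1-v_1)>0$, while $L(\mathbf{v})\geq\log(1-q^{v_1})=\log(1-e^{-2\pi v_1y})$. One checks that $\varphi(v_1,y)=\pi yv_1(1-v_1)+\log(1-e^{-2\pi v_1y})$ is increasing in both $v_1\in(0,1/2]$ and $y$, so its minimum over the admissible region occurs at $v_1=1/N$, $y=N^{2/3}$; it then remains to verify the one-variable inequality
\begin{equation*}
\pi N^{2/3}\tfrac1N\big(1-\tfrac1N\big)+\log\big(1-e^{-2\pi N^{-1/3}}\big)\;>\;\log(2\sin(\pi/N))
\end{equation*}
for every even $N\geq4$ (up to the negligible tail). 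Its left-minus-right side behaves like $\tfrac23\log N\to\infty$, so only the finitely many small $N$ need a direct numerical estimate.

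For (ii) the reference vector $\left[\begin{smallmatrix}1/2\\1/2+1/N\end{smallmatrix}\right]$ realizes the global maximum $M(1/2)=\pi y/12$ of the main term on $[0,1/2]$, with $L=\log|1+e^{-\pi y}e^{2\pi i/N}|$. For an arbitrary $\mathbf{v}$ I would compare through $M(1/2)-M(v_1)=\pi y(v_1-1/2)^2\geq0$. If $v_1<1/2$, then $L(\mathbf{v})\leq\log(1+e^{-2\pi v_1y})$, and the quadratic gain dominates this (the tightest instance being $v_1=1/2-1/N$, where $\pi y/N^2$ still beats the exponentially small difference of first factors). If $v_1=1/2$, the value depends on $v_2$ only through $|1-e^{-\pi y}e^{2\pi iv_2}|$, which increases as $v_2$ approaches $1/2$; since $\left[\begin{smallmatrix}1/2\\1/2\end{smallmatrix}\right]\notin\mathcal{V}_N$, the admissible $v_2$ closest to $1/2$ is $1/2\pm1/N$, precisely the reference. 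Hence $|g_\mathbf{v}(\tau_K)|\leq|g_{\left[\begin{smallmatrix}1/2\\1/2+1/N\end{smallmatrix}\right]}(\tau_K)|$.

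The main obstacle is the small-$v_1$ subcase of (i): when $v_1=1/N$ the main-term advantage $\pi yv_1(1-v_1)$ has size only about $\pi N^{-1/3}$, so it cannot by itself beat the reference first factor $2\sin(\pi/N)\approx2\pi/N$; one genuinely needs the cooperation of $\log(1-e^{-2\pi v_1y})$, and this is exactly where the hypothesis $y\geq N^{2/3}$ (equivalently $|d_K|\geq4N^{4/3}$) enters, being calibrated so that $1-e^{-2\pi v_1y}$ is large enough at the worst point. The restriction $d_K\equiv0\Mod4$ is used to place $\tau_K$ on the imaginary axis, which is what makes the clean separation into $M$, $L$, and $E$ available; the remaining work is the elementary but careful verification of the two displayed one-variable inequalities.
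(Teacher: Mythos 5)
Your proposal is correct and takes essentially the same approach as the paper: both evaluate the $q$-product (\ref{FourierSiegel}) at the purely imaginary point $\tau_K=\sqrt{d_K}/2$, separate the $\mathbf{B}_2$-leading term, the first factor, and an exponentially small tail, and reduce via the monotonicity of $\mathbf{B}_2$ and $\cos$ to the same worst cases (in (i) the vector $\left[\begin{smallmatrix}1/N\\0\end{smallmatrix}\right]$ with $\sqrt{|d_K|}/2=N^{2/3}$, in (ii) the comparison of $\left[\begin{smallmatrix}1/2-1/N\\1/2\end{smallmatrix}\right]$ against $\left[\begin{smallmatrix}1/2\\1/2+1/N\end{smallmatrix}\right]$), with $|d_K|\geq4N^{4/3}$ entering exactly where you say it must. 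Your additive bookkeeping with $\varphi(v_1,y)$ and the asymptotic-plus-finite-check verification merely repackage the paper's multiplicative chain of estimates and its terminal graph/numerical bound, so the two arguments agree in substance.
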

\begin{proof}
Since $g_\mathbf{v}(\tau)^{12N}$ depends
only on $\pm\mathbf{v}\Mod{\mathbb{Z}^2}$ by Proposition \ref{Siegelfamily} and (F2), we may assume that
$0\leq a/N\leq1/2$ and $0\leq b/N<1$.
Now that $d_K\equiv0\Mod{4}$, we have $\tau_K=\sqrt{d_K}/2$.
We obtain by (\ref{FourierSiegel}) that
\begin{eqnarray}
|g_\mathbf{v}(\tau_K)|^2&=&A^{\mathbf{B}_2(a/N)}
(1-2\cos(2\pi b/N)A^{a/N}+A^{2a/N})\nonumber\\
&&\times\prod_{n=1}^\infty\{(1-2\cos(2\pi b/N)A^{n+a/N}+A^{2(n+a/N)})\nonumber\\
&&\times(1-2\cos(2\pi b/N)A^{n-a/N}+A^{2(n-a/N)})\},\label{AAA}
\end{eqnarray}
where $A=e^{-\pi\sqrt{|d_K|}}$ ($<e^{-5\pi}$).\\
(i) If $a/N=0$, then
the assumption $\mathbf{v}\not\equiv\pm\left[
\begin{matrix}0\\1/N\end{matrix}\right]\Mod{\mathbb{Z}^2}$ yields $2/N\leq b/N\leq (N-2)/N$. We then obtain by (\ref{AAA}) and the shape of the graph $Y=\cos X$ that
\begin{equation*}
|g_\mathbf{v}(\tau_K)|>|g_{\left[\begin{smallmatrix}0\\1/N\end{smallmatrix}\right]}(\tau_K)|.
\end{equation*}
\par
Now, let $1/N\leq a/N\leq 1/2$. We find by (\ref{AAA}) and the shape of the graph
$Y=\mathbf{B}_2(X)$ that
\begin{equation}\label{ine}
|g_\mathbf{v}(\tau_K)|\geq|g_{\left[\begin{smallmatrix}a/N\\0\end{smallmatrix}\right]}(\tau_K)|.
\end{equation}
Furthermore, we derive by (\ref{AAA}) that
\begin{eqnarray*}
\frac{|g_{\left[\begin{smallmatrix}0\\1/N\end{smallmatrix}\right]}(\tau_K)|}
{|g_{\left[\begin{smallmatrix}a/N\\0\end{smallmatrix}\right]}(\tau_K)|}
&\leq&
\frac{A^{(1/2)\mathbf{B}_2(0)}2\sin(\pi/N)\prod_{n=1}^\infty(1+A^{2n})}
{A^{(1/2)\mathbf{B}_2(a/N)}(1-A^{a/N})\prod_{n=1}^\infty
(1-A^{n+a/N})(1-A^{n-a/N})}\\
&&\textrm{since $\cos(2\pi/N)\geq0$ for $N\geq4$}\\
&\leq&\frac{2\sin(\pi/N)A^{(1/2)\mathbf{B}_2(0)}\prod_{n=1}^\infty(1+A^{n/4})}
{A^{(1/2)\mathbf{B}_2(1/N)}(1-A^{1/N})\prod_{n=1}^\infty
(1-A^{n/2})^2}\\
&&\textrm{by the shape of the graph $Y=\mathbf{B}_2(X)$ and the fact $1/N\leq a/N\leq1/2$}\\
&\leq&\frac{2\sin(\pi/N)A^{(1/2)(\mathbf{B}_2(0)-\mathbf{B}_2(1/N))}}{(1-A^{1/N})}
\prod_{n=1}^\infty(1+A^{n/4})^3\\
&&\textrm{by the inequality $(1-A^{n/2})(1+A^{n/4})>1$ due to $A<e^{-5\pi}$}\\
&\leq&\frac{2\sin(\pi/N)A^{(1/2N)(1-1/N)}}{(1-A^{1/N})}
e^{\sum_{n=1}^\infty 3A^{n/4}}\quad\textrm{by the fact $1+X<e^X$ for $X>0$}\\
&\leq&\frac{2\sin(\pi/N)e^{-\pi N^{-1/3}(1-1/N)}}
{(1-e^{-2\pi N^{-1/3}})}e^{3A^{1/4}/(1-A^{1/4})}\quad
\textrm{by the assumption $|d_K|\geq 4N^{4/3}$}\\
&<&0.4e^{3e^{-5\pi/4}/(1-e^{-5\pi/4})}
\quad\textrm{by considering the graph of $Y=\frac{2\sin(\pi/X)e^{-\pi X^{-1/3}(1-1/X)}}
{(1-e^{-2\pi X^{-1/3}})}$}\\
&&\textrm{at $X\geq4$ and the fact $A<e^{-5\pi}$}\\
&<&1.
\end{eqnarray*}
Thus we attain by (\ref{ine}) that
\begin{equation*}
|g_\mathbf{v}(\tau_K)|>|g_{\left[\begin{smallmatrix}0\\1/N\end{smallmatrix}\right]}(\tau_K)|.
\end{equation*}
(ii) Considering the shape of the graphs $Y=\mathbf{B}_2(X)$ and $Y=\cos X$ and the fact $\mathbf{v}\in\mathcal{V}_N$, we deduce that
\begin{equation*}
|g_\mathbf{v}(\tau_K)|\leq\max\{
|g_{\left[\begin{smallmatrix}1/2-1/N\\1/2\end{smallmatrix}\right]}(\tau_K)|,
|g_{\left[\begin{smallmatrix}1/2\\1/2+1/N\end{smallmatrix}\right]}(\tau_K)|
\}.
\end{equation*}
So it suffices to show
\begin{equation*}
|g_{\left[\begin{smallmatrix}1/2-1/N\\1/2\end{smallmatrix}\right]}(\tau_K)|\leq
|g_{\left[\begin{smallmatrix}1/2\\1/2+1/N\end{smallmatrix}\right]}(\tau_K)|
\end{equation*}
in order to prove $|g_\mathbf{v}(\tau_K)|\leq
|g_{\left[\begin{smallmatrix}1/2\\1/2+1/N\end{smallmatrix}\right]}(\tau_K)|$.
Now, we derive that
\begin{eqnarray*}
\frac{|g_{\left[\begin{smallmatrix}1/2-1/N\\1/2\end{smallmatrix}\right]}(\tau_K)|}
{|g_{\left[\begin{smallmatrix}1/2\\1/2+1/N\end{smallmatrix}\right]}(\tau_K)|}
&\leq&
\frac{A^{(1/2)\mathbf{B}_2(1/2-1/N)}(1+A^{1/2-1/N})\prod_{n=1}^\infty(1+A^{n+1/2-1/N})(1+A^{n-1/2+1/N})}
{A^{(1/2)\mathbf{B}_2(1/2)}}\\
&&\textrm{by (\ref{AAA}) and the fact $\cos(2\pi/N)\geq0$ for $N\geq4$}\\
&\leq&A^{1/2N^2}(1+A^{1/4})\prod_{n=1}^\infty(1+A^{n+1/4})(1+A^{n-1/2})\quad\textrm{because $A<1$ and $N\geq4$}\\
&\leq&A^{1/2N^2}\prod_{n=1}^\infty(1+A^{n/4})\\
&\leq&A^{1/2N^2}e^{\sum_{n=1}^\infty A^{n/4}}\quad\textrm{due to the fact $1+X<e^X$ for all $X>0$}\\
&\leq&e^{-\pi/N^{4/3}}e^{\sum_{n=1}^\infty e^{-(\pi N^{2/3}/2)n}}
\quad\textrm{since $|d_K|\geq4N^{4/3}$}\\
&=&e^{-\pi/N^{4/3}+e^{-\pi N^{2/3}/2}/(1-e^{-\pi N^{2/3}/2})}\\
&<&e^{-\pi/N^{4/3}+8/\pi^2N^{4/3}(1-e^{-\pi N^{2/3}/2})}\quad
\textrm{because $e^{-X}<2X^{-2}$ for all $X>0$}\\
&=&e^{(\pi/N^{4/3})(-1+8/\pi^3(1-e^{-\pi N^{2/3}/2}))}\\
&\leq&e^{(\pi/N^{4/3})(-1+8/\pi^3(1-e^{-\pi2^{1/3}}))}\quad\textrm{owing to the fact $N\geq4$}\\
&<&1.
\end{eqnarray*}
This proves (ii).
\end{proof}

\begin{theorem}\label{smallinvariant}
The special value
\begin{equation}\label{quotientinv}
\zeta_{2N}^{-4/\gcd(4,N)}
(g_{\left[\begin{smallmatrix}1/2\\1/2+1/N\end{smallmatrix}\right]}(\tau_K)/
g_{\left[\begin{smallmatrix}0\\1/N\end{smallmatrix}\right]}(\tau_K))^{8/\gcd(4,N)}
\end{equation}
generates $K_\mathfrak{n}$ over $H_K$.
Moreover, if $N\equiv0\Mod{4}$, then it is a $6N$-th root of the unit
$g_\mathfrak{n}^{12N}(C)/g_\mathfrak{n}^{12N}(C_0)$, where
$C=[((N/2)\tau_K+N/2+1)\mathcal{O}_K]$ and $C_0=[\mathcal{O}_K]$.
\end{theorem}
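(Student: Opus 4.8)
The plan is to strip the root of unity off the invariant (\ref{quotientinv}) and reduce to a genuine ratio of $12N$-th powers of Siegel values, to which Shimura's reciprocity law (Proposition \ref{Shimura}) and the absolute-value estimates of Lemma \ref{inequalitylemma} apply directly. Put $m=\gcd(4,N)\in\{2,4\}$, $e=8/m$, $\mathbf{v}_0=\left[\begin{smallmatrix}0\\1/N\end{smallmatrix}\right]$, $\mathbf{v}_1=\left[\begin{smallmatrix}1/2\\1/2+1/N\end{smallmatrix}\right]$, and abbreviate the value (\ref{quotientinv}) as $\eta=\zeta_{2N}^{-4/m}\big(g_{\mathbf{v}_1}(\tau_K)/g_{\mathbf{v}_0}(\tau_K)\big)^{e}$. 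The two numerical facts driving the argument are that $\eta^{3Nm/2}=\big(g_{\mathbf{v}_1}(\tau_K)/g_{\mathbf{v}_0}(\tau_K)\big)^{12N}$, since the accumulated root of unity is $\zeta_{2N}^{-6N}=1$, and that this ratio is an honest value of the function $g_{\mathbf{v}_1}(\tau)^{12N}/g_{\mathbf{v}_0}(\tau)^{12N}\in\mathcal{F}_N$.

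First I would check that $\eta\in K_\mathfrak{n}$. Applying Lemma \ref{modularity} to the integer family $m(\mathbf{v}_1)=e$, $m(\mathbf{v}_0)=-e$ (whose three congruences are readily verified from $N\equiv0\Mod{2}$ and $\sum_\mathbf{v}m(\mathbf{v})=0$) shows that $\zeta_0\big(g_{\mathbf{v}_1}(\tau)/g_{\mathbf{v}_0}(\tau)\big)^{e}\in\mathcal{F}_N$, where the prescribed root of unity works out to $\zeta_0=e^{\pi ie/4}\zeta_{2N}^{-4/m}$. Hence the function $\mathfrak{g}(\tau):=e^{-\pi ie/4}\zeta_0\big(g_{\mathbf{v}_1}(\tau)/g_{\mathbf{v}_0}(\tau)\big)^{e}$ satisfies $\mathfrak{g}(\tau_K)=\eta$; since $e^{-\pi ie/4}\in\{-1,-i\}\subseteq\mathbb{Q}(\zeta_N)\subseteq\mathcal{F}_N$ (using $4\mid N$ when $m=4$) and $g_{\mathbf{v}_0}(\tau_K)\neq0$ by Lemma \ref{inequalitylemma}(i), we have $\mathfrak{g}\in\mathcal{F}_{N,K}$, so Proposition \ref{CM} yields $\eta\in K_\mathfrak{n}$.

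The heart of the proof is to show that $Q:=\eta^{3Nm/2}=g_{\mathbf{v}_1}(\tau_K)^{12N}/g_{\mathbf{v}_0}(\tau_K)^{12N}$ already generates $K_\mathfrak{n}$ over $H_K$; by Galois theory it suffices to prove its stabilizer in $\mathrm{Gal}(K_\mathfrak{n}/H_K)\simeq\mathcal{W}_{N,K}/\{\pm I_2\}$ is trivial. For $\sigma$ corresponding to $\alpha=\left[\begin{smallmatrix}t-bs&-cs\\s&t\end{smallmatrix}\right]$, Proposition \ref{Shimura} together with Proposition \ref{Siegelfamily}, (F2) and (F3) gives the complex-number identity $Q^\sigma=g_{{}^t\alpha\mathbf{v}_1}(\tau_K)^{12N}/g_{{}^t\alpha\mathbf{v}_0}(\tau_K)^{12N}$. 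Taking absolute values and invoking Lemma \ref{inequalitylemma}, which makes $|g_{\mathbf{v}_0}(\tau_K)|$ the strict minimum and $|g_{\mathbf{v}_1}(\tau_K)|$ the maximum among all $|g_\mathbf{v}(\tau_K)|$, forces $|Q^\sigma|\leq|Q|$, with equality forcing in particular $|g_{{}^t\alpha\mathbf{v}_0}(\tau_K)|=|g_{\mathbf{v}_0}(\tau_K)|$. By Lemma \ref{inequalitylemma}(i) this gives ${}^t\alpha\mathbf{v}_0\equiv\pm\mathbf{v}_0\Mod{\mathbb{Z}^2}$; since ${}^t\alpha\mathbf{v}_0=\left[\begin{smallmatrix}s/N\\t/N\end{smallmatrix}\right]$, it means $s\equiv0$, $t\equiv\pm1\Mod{N}$, i.e. $\alpha\equiv\pm I_2$ and $\sigma=\mathrm{id}$. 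Thus $H_K(Q)=K_\mathfrak{n}$, and since $Q=\eta^{3Nm/2}$ with $\eta\in K_\mathfrak{n}$ we conclude $K_\mathfrak{n}=H_K(Q)\subseteq H_K(\eta)\subseteq K_\mathfrak{n}$, so $\eta$ generates $K_\mathfrak{n}$ over $H_K$.

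Finally, when $N\equiv0\Mod{4}$ we have $m=4$ and $3Nm/2=6N$, so $\eta^{6N}=g_{\mathbf{v}_1}(\tau_K)^{12N}/g_{\mathbf{v}_0}(\tau_K)^{12N}$, which by Lemma \ref{quotientremark} equals the unit $g_\mathfrak{n}^{12N}(C)/g_\mathfrak{n}^{12N}(C_0)$; hence $\eta$ is a $6N$-th root of that unit. I expect the only delicate point to be the bookkeeping of the prefactor $\zeta_{2N}^{-4/m}$: one must check both that it assembles with the Siegel quotient into a function lying in $\mathcal{F}_N$ (needed for $\eta\in K_\mathfrak{n}$) and that it cancels upon passing to the $(3Nm/2)$-th power (needed so the reciprocity-plus-absolute-value argument sees a true modular-function value). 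Once this is settled, the descent is immediate from Lemma \ref{inequalitylemma}.
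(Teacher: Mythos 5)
Your proposal is correct and follows essentially the same route as the paper: membership in $K_\mathfrak{n}$ via Lemma \ref{modularity} and Proposition \ref{CM}, generation over $H_K$ by applying Shimura's reciprocity law (Proposition \ref{Shimura}) together with (F2), (F3) and the strict/weak inequalities of Lemma \ref{inequalitylemma} to the $12N$-th power quotient, and the $6N$-th root statement from Lemma \ref{quotientremark}. Your explicit bookkeeping of the root of unity $\zeta_0=e^{\pi ie/4}\zeta_{2N}^{-4/m}$ and of the equality analysis forcing $\alpha\equiv\pm I_2$ merely spells out steps the paper leaves implicit.
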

\begin{proof}
Since
$\zeta_{2N}^{-4/\gcd(4,N)}(g_{\left[\begin{smallmatrix}1/2\\1/2+1/N\end{smallmatrix}\right]}(\tau)/
g_{\left[\begin{smallmatrix}0\\1/N\end{smallmatrix}\right]}(\tau))^{8/\gcd(4,N)}$
belongs to $\mathcal{F}_N$ by Lemma \ref{modularity}, its special value at $\tau_K$ lies in $K_\mathfrak{n}$ by Proposition \ref{CM}.
Let $\sigma\in\mathrm{Gal}(K_\mathfrak{n}/H_K)$ such that $\sigma\neq\mathrm{id}$.
We observe that
\begin{eqnarray*}
\bigg|\bigg(\frac{g_{\left[\begin{smallmatrix}1/2\\1/2+1/N\end{smallmatrix}\right]}(\tau_K)^{12N}}
{g_{\left[\begin{smallmatrix}0\\1/N\end{smallmatrix}\right]}(\tau_K)^{12N}}
\bigg)^\sigma\bigg|&=&
\bigg|\frac{(g_{\left[\begin{smallmatrix}1/2\\1/2+1/N\end{smallmatrix}\right]}(\tau_K)^{12N})^\sigma}
{(g_{\left[\begin{smallmatrix}0\\1/N\end{smallmatrix}\right]}(\tau_K)^{12N})^\sigma}\bigg|
\quad\textrm{by Lemma \ref{modularity} and Proposition \ref{CM}}\\
&=&\bigg|\frac{(g_{\left[\begin{smallmatrix}1/2\\1/2+1/N\end{smallmatrix}\right]}(\tau)^{12N})^\alpha
(\tau_K)}
{(g_{\left[\begin{smallmatrix}0\\1/N\end{smallmatrix}\right]}(\tau)^{12N})^\alpha(\tau_K)}\bigg|
\\&&\textrm{for some $\alpha\in\mathcal{W}_{N,K}/\{\pm I_2\}$ by Proposition \ref{Shimura}}\\
&=&\bigg|\frac{g_\mathbf{u}(\tau_K)^{12N}}
{g_\mathbf{v}(\tau_K)^{12N}}\bigg|\\
&&\textrm{for some $\mathbf{u},\mathbf{v}\in\mathcal{V}_N$ by Proposition \ref{Siegelfamily}, (F2) and (F3)}\\
&<&\bigg|\frac{g_{\left[\begin{smallmatrix}1/2\\1/2+1/N\end{smallmatrix}\right]}(\tau_K)^{12N}}
{g_{\left[\begin{smallmatrix}0\\1/N\end{smallmatrix}\right]}(\tau_K)^{12N}}\bigg|\quad\textrm{by Lemma \ref{inequalitylemma}}.
\end{eqnarray*}
This implies that the value in (\ref{quotientinv}) generates $K_\mathfrak{n}$ over $H_K$.
The second part of the theorem follows from Lemma \ref{quotientremark}.
\end{proof}

\begin{remark}\label{lastremark}
\begin{itemize}
\item[(i)] Suppose that $N$ is not a power of $2$, and so
$N$ has at least two prime factors. Then, both $g_\mathbf{v}(\tau)^{12N}$ and $g_\mathbf{v}(\tau)^{-12N}$ are integral over $\mathbb{Z}[j(\tau)]$
for any $\mathbf{v}\in\mathcal{V}_N$ (\cite[Chapter 2, Theorem 2.2]{K-L}).
Moreover, since $j(\tau_K)$ is an algebraic integer (\cite[Theorem 4.14]{Shimura}),
we see that $g_\mathbf{v}(\tau_K)^{12N}$ is a unit. Therefore, the invariant in (\ref{quotientinv}) is a unit.
\item[(ii)] For any $\mathbf{u},\mathbf{v}\in\mathbb{Q}^2\setminus\mathbb{Z}^2$ such that $\mathbf{u}\not
\equiv\pm\mathbf{v}\Mod{\mathbb{Z}^2}$ we have the relation
\begin{equation*}
\wp_\mathbf{u}(\tau)-\wp_\mathbf{v}(\tau)
=-(g_{\mathbf{u}+\mathbf{v}}(\tau)
g_{\mathbf{u}-\mathbf{v}}(\tau)/
g_\mathbf{u}(\tau)^2g_\mathbf{v}(\tau)^2)\eta(\tau)^4
\end{equation*}
(\cite[p.51]{K-L}).
This relation together with (\ref{Fricke}) and (\ref{FourierSiegel}) yields
\begin{equation}\label{Frickequotient}
\frac{f_{\left[\begin{smallmatrix}0\\1/N\end{smallmatrix}\right]}(\tau_K)
-f_{\left[\begin{smallmatrix}1/2\\1/2\end{smallmatrix}\right]}(\tau_K)}
{f_{\left[\begin{smallmatrix}0\\1/2\end{smallmatrix}\right]}(\tau_K)-
f_{\left[\begin{smallmatrix}1/2\\1/2\end{smallmatrix}\right]}(\tau_K)}
=-\zeta_{2N}
\frac{g_{\left[\begin{smallmatrix}1/2\\1/2+1/N\end{smallmatrix}\right]}(\tau_K)^2}
{g_{\left[\begin{smallmatrix}0\\1/N\end{smallmatrix}\right]}(\tau_K)^2}\cdot
\zeta_4^3\frac{g_{\left[\begin{smallmatrix}0\\1/2\end{smallmatrix}\right]}(\tau_K)^2}
{g_{\left[\begin{smallmatrix}1/2\\0\end{smallmatrix}\right]}(\tau_K)^2}.
\end{equation}
Since $f_\mathfrak{n}(C_0)=f_{\left[\begin{smallmatrix}
0\\1/N\end{smallmatrix}\right]}(\tau_K)$
generates $K_\mathfrak{n}$ over $H_K$ by Theorem \ref{Frickesingular}, and
$f_{\left[\begin{smallmatrix}
1/2\\1/2\end{smallmatrix}\right]}(\tau_K)$
and
$f_{\left[\begin{smallmatrix}
0\\1/2\end{smallmatrix}\right]}(\tau_K)$
lie in $K_{2\mathcal{O}_K}$ by Propositions \ref{Frickefamily} and \ref{CM},
the value in the left side of (\ref{Frickequotient}) generates $K_\mathfrak{n}$ over $K_{2\mathcal{O}_K}$.
Now, assume that $N\equiv0\Mod{4}$.
Since $\zeta_4^3g_{\left[\begin{smallmatrix}0\\1/2\end{smallmatrix}\right]}(\tau_K)^2/
g_{\left[\begin{smallmatrix}1/2\\0\end{smallmatrix}\right]}(\tau_K)^2$
belongs to $K_{4\mathcal{O}_K}$ by Lemma \ref{modularity} and Proposition \ref{CM},
the value
\begin{equation*}
\zeta_{2N}g_{\left[\begin{smallmatrix}1/2\\1/2+1/N\end{smallmatrix}\right]}(\tau_K)^2
/g_{\left[\begin{smallmatrix}0\\1/N\end{smallmatrix}\right]}(\tau_K)^2
\end{equation*}
generates $K_{\mathfrak{n}}$ over $K_{4\mathcal{O}_K}$.
\end{itemize}
\end{remark}

\begin{example}\label{lastexample}
Let $K=\mathbb{Q}(\sqrt{-10})$ and $\mathfrak{n}=4\mathcal{O}_K$. Consider the special value
\begin{equation*}
x=\zeta_8^7g_{\left[\begin{smallmatrix}1/2\\3/4\end{smallmatrix}\right]}(\sqrt{-10})^2/
g_{\left[\begin{smallmatrix}0\\1/4\end{smallmatrix}\right]}(\sqrt{-10})^2.
\end{equation*}
This value generates $K_\mathfrak{n}$ over $H_K$ as an algebraic unit by Theorem \ref{smallinvariant}.
Furthermore, since $x$ is a real number by (\ref{FourierSiegel}), we see that
\begin{equation*}
[K(x):K]=[K(x):\mathbb{Q}]/[K:\mathbb{Q}]
=[K(x):\mathbb{Q}(x)][\mathbb{Q}(x):\mathbb{Q}]/[K:\mathbb{Q}]
=[\mathbb{Q}(x):\mathbb{Q}].
\end{equation*}
Hence the minimal polynomial of $x$ over $K$ has integer coefficients.
By using Proposition \ref{Shimura}, \cite{Stevenhagen} and \cite[Chapter 2, $\S$1]{K-L} one can readily find
all the Galois conjugates of $x$ over $K$ (possibly with some multiplicity) as follows:
\begin{equation*}
\begin{array}{ll}
x_1=\zeta_8^7g_{\left[\begin{smallmatrix}1/2\\3/4\end{smallmatrix}\right]}(\sqrt{-10})^2/
g_{\left[\begin{smallmatrix}0\\1/4\end{smallmatrix}\right]}(\sqrt{-10})^2, &
x_2=\zeta_8^5g_{\left[\begin{smallmatrix}1/4\\3/4\end{smallmatrix}\right]}(\sqrt{-10})^2/
g_{\left[\begin{smallmatrix}1/4\\1/4\end{smallmatrix}\right]}(\sqrt{-10})^2,\\
x_3=\zeta_8^3g_{\left[\begin{smallmatrix}0\\3/4\end{smallmatrix}\right]}(\sqrt{-10})^2/
g_{\left[\begin{smallmatrix}1/2\\1/4\end{smallmatrix}\right]}(\sqrt{-10})^2,
& x_4=\zeta_8^7g_{\left[\begin{smallmatrix}3/4\\3/4\end{smallmatrix}\right]}(\sqrt{-10})^2/
g_{\left[\begin{smallmatrix}3/4\\1/4\end{smallmatrix}\right]}(\sqrt{-10})^2,\\
x_5=\zeta_8g_{\left[\begin{smallmatrix}3/4\\1/2\end{smallmatrix}\right]}(\sqrt{-10}/2)^2/
g_{\left[\begin{smallmatrix}1/4\\0\end{smallmatrix}\right]}(\sqrt{-10}/2)^2, &
x_6=\zeta_8g_{\left[\begin{smallmatrix}3/4\\3/4\end{smallmatrix}\right]}(\sqrt{-10}/2)^2/
g_{\left[\begin{smallmatrix}1/4\\3/4\end{smallmatrix}\right]}(\sqrt{-10}/2)^2,\\
x_7=\zeta_8^5g_{\left[\begin{smallmatrix}3/4\\0\end{smallmatrix}\right]}(\sqrt{-10}/2)^2/
g_{\left[\begin{smallmatrix}1/4\\1/2\end{smallmatrix}\right]}(\sqrt{-10}/2)^2,
&
x_8=\zeta_8^3g_{\left[\begin{smallmatrix}3/4\\1/4\end{smallmatrix}\right]}(\sqrt{-10}/2)^2/
g_{\left[\begin{smallmatrix}1/4\\1/4\end{smallmatrix}\right]}(\sqrt{-10}/2)^2.
\end{array}
\end{equation*}
And, one can also compute (by using MAPLE Ver.16) $\min(x,K)$ as
\begin{equation*}
\prod_{k=1}^8(X-x_k)=X^8-72X^7+12X^6+72X^5+38X^4+72X^3+12X^2-72X+1,
\end{equation*}
which is irreducible over $\mathbb{Q}$. Therefore, $x$ generates
$K_\mathfrak{n}$ even over $K$ as a unit. Here, we observe that the
coefficients of $\min(x,K)$ are much smaller than those of
\begin{eqnarray*}
&&\min(g_{\left[\begin{smallmatrix}0\\1/4\end{smallmatrix}\right]}(\sqrt{-10})^{48},K)\\&=&
X^8-181195540256817728X^7
-5775663114562606906112X^6\\
&&-27035464691637377457360896X^5
+541339076030741096821545656320X^4\\
&& -124937615343087944795342556102656X^3
+15661918473435227713231818559848448X^2\\
&&-32831816404527400323644148540243968X+16777216.
\end{eqnarray*}
\end{example}

\bibliographystyle{amsplain}

\address{
National Institute for Mathematical
Sciences\\
Daejeon 305-811\\
Republic of Korea} {hoyunjung@nims.re.kr}
\address{
Department of Mathematical Sciences \\
KAIST \\
Daejeon 305-701 \\
Republic of Korea} {jkkoo@math.kaist.ac.kr}
\address{
Department of Mathematics\\
Hankuk University of Foreign Studies\\
Yongin-si, Gyeonggi-do 449-791\\
Republic of Korea } {dhshin@hufs.ac.kr}

\end{document}